\newtheorem{theorem}{Theorem}[section]
\newtheorem{lemma}[theorem]{Lemma}
\newtheorem{proposition}[theorem]{Proposition}
\newtheorem{corollary}[theorem]{Corollary}
\newcommand{\Lie}{\mathrm{Lie}}
\newcommand{\id}{\mathrm{id}}
\newcommand{\den}{\mathrm{d}}
\newcommand{\W}{\mathscr W}
\newcommand{\dd}{{\delta_L}}
\newcommand{\diff}{\Delta}
\newcommand{\Dl}{L}
\newcommand{\p}{\partial}
\newcommand{\ccc}{{d_L}}
\newcommand{\cd}{{\delta_L}}
\newcommand{\cc}{{h_L}}
\newcommand{\Exp}{{\mathrm {Exp}}}
\newcommand{\oL}{{\omega_L}}
\begin{document}

\title{The $p$-adic analytic subgroup theorem revisited}
\author[C. Fuchs and D.H. Pham]{Clemens Fuchs and Duc Hiep Pham}

\begin{abstract}
It is well-known that the W\"ustholz' analytic subgroup theorem is one of the most powerful theorems in transcendence theory. The theorem gives in a very systematic and conceptual way the transcendence of a large class of complex numbers, e.g. the transcendence of $\pi$ which is originally due to Lindemann. In this paper we revisit the $p$-adic analogue of the analytic subgroup theorem and present a proof based on the method described and developed by the authors in a recent related paper.
\end{abstract}
\maketitle

\noindent 2010 Mathematical Subject Classification: 11G99 (14L10, 11J86)\\
Keywords: commutative algebraic groups, transcendence theory, $p$-adic numbers

\section{Introduction}

In the 1980's, W\"ustholz formulated and proved the so-called analytic subgroup theorem which counts as one of the most striking and significant results in (complex) transcendental number theory and has many applications (see \cite {aw} or \cite{w1}). Roughly speaking, the theorem asserts that an analytic subgroup of a commutative algebraic group contains a non-trivial algebraic point if and only if it contains a non-trivial algebraic subgroup. The analytic subgroup theorem gives, as very special cases, the classical theorems of Hermite, Lindemann and Gelfond-Schneider. It also directly implies the deep results of Schneider, Baker, Masser, Coates, Lang, et altera. It is the most natural generalization in terms of the qualitative version of Baker's theorem on linear forms in logarithms of algebraic numbers.

If we replace the complex domain by the $p$-adic domain, a natural question one can now ask is ``\emph{How about a $p$-adic analogue of the analytic subgroup theorem?}". Recently, Matev formulated a $p$-adic analogue of the analytic subgroup theorem and gave a proof for the statement (see \cite{matev}) following more or less the proof of the classical analytic subgroup theorem. We owe a debt of gratitude to an anonymous referee for the observation that there is even an earlier reference: the statement has already been mentioned, without proof and in equivalent form, by Bertrand in \cite{bertrand} (see Remarque 2 on p. 37). In this paper we present our version of Matev's proof in more details based on the methods described and developed in a recent related paper by the authors (see \cite{fuchs-pham}) and give some corollaries that follow.

\section{The $p$-adic analytic subgroup theorem and some consequences}
For the reader's convenience, we recall W\"ustholz' analytic subgroup theorem in more details. Let $G$ be a commutative algebraic group defined over $\overline{\mathbb Q}$ and $\frak g$ the Lie algebra of $G$. The set of complex points $G(\mathbb C)$ of $G$ has naturally the structure of a complex Lie group whose Lie algebra $\frak g_{\mathbb C}$ is the complex vector space $\frak g\otimes_{\overline{\mathbb Q}}\mathbb C$. There is an analytic homomorphism, the so-called (complex) exponential map \[\exp_G: \frak g_{\mathbb C}\rightarrow G(\mathbb C)\] which is defined as follows. For $\xi\in \frak g_{\mathbb C}$ there exists a unique homomorphism \[\varphi_{\xi}: \mathbb C\rightarrow G(\mathbb C)\] such that its differential maps the vector field $dt$ to $\xi$. We define \[\exp_G(\xi):=\varphi_{\xi}(1).\] Let $\mathfrak b$ be a subalgebra of $\frak g$ and put $B:=\exp_G(\mathfrak b\otimes_K\mathbb C)$. Then $B$ is a Lie subgroup of $G(\mathbb C)$ and said to be an {analytic subgroup} of $G$. The following theorem is called {the analytic subgroup theorem} (see \cite [Theorem 6.1]{aw}).
\begin{theorem}[W\"ustholz] Let $B\subseteq G(\mathbb C)$ be an analytic subgroup of $G(\mathbb C)$. Then $B(\overline{\mathbb Q}):=B\cap G(\overline{\mathbb Q})$ is non-trivial if and only if there exists a non-trivial algebraic subgroup $H\subseteq G$ defined over $\overline{\mathbb Q}$ such that $H(\mathbb C)\subseteq B$.
\end{theorem}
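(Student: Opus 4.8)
The plan is to treat the two implications separately; only the second requires work. The ``if'' direction is immediate: a non-trivial algebraic subgroup $H\subseteq G$ defined over $\overline{\mathbb Q}$ already has non-trivial $\overline{\mathbb Q}$-rational points --- for instance torsion points, or simply because $\overline{\mathbb Q}$-points are Zariski dense on a variety over the algebraically closed field $\overline{\mathbb Q}$ --- so if $H(\mathbb C)\subseteq B$ then $B(\overline{\mathbb Q})=B\cap G(\overline{\mathbb Q})\supseteq H(\overline{\mathbb Q})$ is non-trivial.

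For the converse I would first reduce to a transcendental statement. Suppose $\gamma\in B(\overline{\mathbb Q})$ is different from the identity, and pick $u\in\mathfrak b_{\mathbb C}:=\mathfrak b\otimes_{\overline{\mathbb Q}}\mathbb C$ with $\exp_G(u)=\gamma$; then $u\neq 0$. The algebraic subgroups of $G$ defined over $\overline{\mathbb Q}$ whose complexified Lie algebra contains $u$ form a non-empty family, stable under intersection (we are in characteristic zero) and satisfying the descending chain condition, so it has a smallest member $G_0$; replacing $G_0$ by its identity component we may take it connected, and then $\dim G_0\ge 1$ and $\gamma=\exp_{G_0}(u)\in G_0(\mathbb C)\cap G(\overline{\mathbb Q})=G_0(\overline{\mathbb Q})$. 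It suffices to show $\Lie(G_0)\subseteq\mathfrak b$: granting this, $G_0(\mathbb C)=\exp_{G_0}(\Lie(G_0)_{\mathbb C})\subseteq\exp_G(\mathfrak b_{\mathbb C})=B$, so $H:=G_0$ is the desired subgroup. Now $\mathfrak b\cap\Lie(G_0)$ is a $\overline{\mathbb Q}$-subspace of $\Lie(G_0)$ with $u\in(\mathfrak b\cap\Lie(G_0))_{\mathbb C}$, so $\Lie(G_0)\subseteq\mathfrak b$ follows once we know that the coordinates of $u$ with respect to a $\overline{\mathbb Q}$-basis of $\Lie(G_0)$ are $\overline{\mathbb Q}$-linearly independent (a vector lying in the complexification of a proper $\overline{\mathbb Q}$-subspace has $\overline{\mathbb Q}$-linearly dependent coordinates). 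Thus the whole theorem reduces to the transcendental assertion: \emph{if $G_0$ is connected commutative over $\overline{\mathbb Q}$, $\exp_{G_0}(u)\in G_0(\overline{\mathbb Q})$, and $G_0$ is the smallest algebraic subgroup whose complexified Lie algebra contains $u$, then the coordinates of $u$ in any $\overline{\mathbb Q}$-basis of $\Lie(G_0)$ are linearly independent over $\overline{\mathbb Q}$.}

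I would prove this by contradiction, using the auxiliary-function method in the form developed in \cite{fuchs-pham}. If the coordinates of $u$ were $\overline{\mathbb Q}$-linearly dependent, then $u$ would lie in the complexification of a proper $\overline{\mathbb Q}$-subspace of $\Lie(G_0)$; choosing the smallest such subspace and a $\overline{\mathbb Q}$-basis $w_1,\dots,w_r$ of it, one may write $u=\sum_{k=1}^{r}\omega_k w_k$ with $\omega_1,\dots,\omega_r\in\mathbb C$ linearly independent over $\overline{\mathbb Q}$, while by hypothesis no proper algebraic subgroup of $G_0$ contains $u$ in its Lie algebra. Fix a projective embedding of a compactification of $G_0$, giving a degree $D$ and a height for hypersurfaces, and consider the analytic homomorphism $\Theta\colon\mathbb C^{r}\to G_0(\mathbb C)$, $(z_1,\dots,z_r)\mapsto\exp_{G_0}(\sum_k z_k w_k)$. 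Along the complex ray through $\omega=(\omega_1,\dots,\omega_r)$ one has $\Theta(s\omega)=s\gamma\in G_0(\overline{\mathbb Q})$ for $s\in\mathbb Z$, with degree and Weil height growing at a controlled polynomial rate in $s$; and since the $w_k$ are $\overline{\mathbb Q}$-rational, the associated invariant derivations preserve algebraicity of coefficients, so the requirement ``every partial derivative of $P\circ\Theta$ of order $<T$ vanishes at $s\omega$ for $0\le s<S$'' is a system of $\overline{\mathbb Q}$-linear equations in the coefficients of a polynomial $P$. By Siegel's lemma one obtains, choosing $D,T,S$ so that $D^{\dim G_0}$ comfortably exceeds the number $\asymp S\,T^{r}$ of these equations while the heights stay controlled, a non-zero $P$ of degree $\le D$ and bounded height meeting all of them. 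A Schwarz-type estimate in several variables, using that $\exp_{G_0}$ is of finite order of growth, then propagates this vanishing to $0\le s<S'$ for some $S'$ far larger than $S$. Finally a zero (multiplicity) estimate on the commutative group variety $G_0$, of Masser--W\"ustholz type, shows that for $D,T,S'$ in suitable ranges a non-zero $P$ of degree $\le D$ cannot vanish to order $\ge T$ in the directions $w_1,\dots,w_r$ at $S'$ of the points $s\gamma$ unless some proper algebraic subgroup of $G_0$ contains all of $w_1,\dots,w_r$ --- hence $u$ --- in its Lie algebra, contradicting the minimality of $G_0$.

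The genuine obstacle, I expect, is the zero/multiplicity estimate together with the delicate simultaneous calibration of $D,T,S,S'$ against the degree of the number field and against the several height contributions --- logarithmic, linear and quadratic --- coming from the possibly mixed structure of $G_0$ as an extension of an abelian variety by a linear algebraic group. By comparison, the construction of the auxiliary polynomial, the analytic extrapolation, and the concluding Liouville-type inequality should be essentially routine once the framework of \cite{fuchs-pham} is in place.
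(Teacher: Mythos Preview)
The paper does not give a proof of this statement; Theorem 2.1 is quoted from \cite[Theorem 6.1]{aw} and serves only as background for the $p$-adic analogue (Theorem \ref{past}), which is what the paper actually proves. So there is no proof in the paper to compare your proposal against.

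That said, two comments on your sketch. First, your appeal to the framework of \cite{fuchs-pham} is off target: that paper and its Schwarz lemma (Lemma \ref{4} here) are ultrametric, whereas the complex theorem requires the archimedean Schwarz lemma and growth bounds for the complex exponential of $G$. Second, and more substantively, the architecture of the standard proof in \cite{aw} --- mirrored in this paper for the $p$-adic case --- is not quite what you describe. One does not expect the multiplicity estimate to hand you a proper subgroup containing the $w_k$ directly; instead one first reduces to the situation where the pair $(G,V)$ is \emph{semistable} (cf.\ Lemma \ref{lss}), and in that situation the multiplicity estimate (the analogue of \cite[Theorem 6.14]{aw} used at the end of Section \ref{pross}) forces $\gamma$ to be torsion. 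The desired algebraic subgroup then arises by induction on $\dim G$, working inside the kernel of the quotient through which semistability was achieved. Your minimal-$G_0$ reduction is a cousin of this, but the final step as you phrase it --- ``the zero estimate shows some proper algebraic subgroup contains all the $w_k$'' --- is not what the estimate delivers, and you would have to route through semistability (or an equivalent Philippon-type formulation) to close the argument. You also leave the torsion case implicit; it is trivial (take $H=\langle\gamma\rangle$), but it should be separated out before the transcendental machinery is invoked.
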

This theorem gives a fundamental generalization of the qualitative version of Baker's theorem in terms of commutative algebraic groups. More precisely, the qualitative version of Baker's theorem is the special case when the underlying algebraic group is a torus. Furthermore, it also implies many interesting results in transcendence.
To give an example we consider the simplest application of the analytic subgroup theorem, namely Lindemann's theorem: If $\alpha$ is any non-zero complex number then not both of $\alpha$ and $e^{\alpha}$ can be algebraic. In fact, assume the contrary that both $\alpha$ and $e^{\alpha}$ are algebraic. We consider the algebraic group $G=\mathbb G_a\times \mathbb G_m$, here $\mathbb G_a$ denotes the additive group and $\mathbb G_m$ denotes the multiplicative group as usual. Then $G$ is defined over $\overline{\mathbb Q}$, and the Lie algebra of $G$ is $\mathfrak g\cong \Lie(\mathbb G_a)\times \Lie(\mathbb G_m)\cong \mathbb Q\times \mathbb Q.$ This gives $\mathfrak g\otimes_{\mathbb Q}\mathbb C\cong\mathbb C\times \mathbb C.$ The set of complex points of $G$ is $G(\mathbb C)=\mathbb C\times\mathbb C^*$ and the exponential map is given by $\exp_G: \mathbb C\times \mathbb C\rightarrow \mathbb C\times \mathbb C^*, (z,w)\mapsto (z,e^w).$ Let $\Delta=\{(z,z)\in \mathbb C\times \mathbb C\}$ be the diagonal and let $B=\exp_G(\Delta)$ be the associated analytic subgroup. Then \[(0,1)\neq (\alpha, e^{\alpha})=\exp_G(\alpha,\alpha)\in B(\overline {\mathbb Q}).\]
Hence there exists a non-trivial algebraic subgroup $H$ defined over $\overline {\mathbb Q}$ of $G$ such that $H(\mathbb C)\subseteq B$. It follows that $H(\mathbb C)=B$, i.e. $B$ is an algebraic group. We deduce that the function $e^z$ is an algebraic function, a contradiction. This shows Lindemann's theorem.

We now discuss the $p$-adic analytic subgroup theorem. Let $F$ be a complete subfield of $\mathbb C_p$ containing $\overline{\mathbb Q}$, i.e. $F=\mathbb C_p$, and let $G$ be a commutative algebraic group defined over $\overline{\mathbb Q}$. According to Section \ref{Log} there is the logarithm map \[\log_{G(F)}: G(F)_f\rightarrow \Lie(G(F))=\Lie(G)\otimes_{\overline{\mathbb Q}}F.\] We denote by $V_F$ the $F$-vector space $V\otimes_{\overline{\mathbb Q}}F$ and by $G_f(\overline{\mathbb Q})$ the set of algebraic points in $G(F)_f$, i.e. $G_f(\overline{\mathbb Q})=G(F)_f\cap G(\overline{\mathbb Q})$. The {$p$-adic analytic subgroup theorem} reads as follows:
\begin{theorem}\label{past}
Let $G$ be a commutative algebraic group of positive dimension defined over $\overline{\mathbb Q}$ and let $V\subseteq \Lie(G)$ be a non-trivial
$\overline{\mathbb Q}$-linear subspace. For $\gamma\in G_f(\overline{\mathbb Q})$ with
$0\neq\log_{G(F)}(\gamma)\in V_F$
there exists an algebraic subgroup $H\subseteq G$ of positive dimension defined over $\overline{\mathbb Q}$ such that {\rm Lie}$(H)\subseteq V$ and $\gamma\in H(\overline{\mathbb Q})$.
\end{theorem}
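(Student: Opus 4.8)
The plan is to run the standard transcendence machinery -- an auxiliary section built by Siegel's lemma, an analytic extrapolation, and a multiplicity (zero) estimate on commutative algebraic groups -- entirely in the $p$-adic setting, following the method of \cite{fuchs-pham}. First I would make the usual reductions: replace $V$ by the smallest $\overline{\mathbb Q}$-linear subspace of $\Lie(G)$ whose $F$-span contains $\w:=\log_{G(F)}(\ga)$, and replace $G$ by the connected algebraic subgroup $G_0\subseteq G$ which is the Zariski closure of the cyclic group generated by $\ga$; here $G_0$ is defined over $\overline{\mathbb Q}$ and of positive dimension because $\ga$ is non-torsion (otherwise $\w=\log_{G(F)}(\ga)=0$), and by minimality $V\subseteq\Lie(G_0)$. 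After these reductions it suffices to prove that if $\ga$ generates a Zariski-dense subgroup of $G$ and $0\neq\w\in V_F$, then $V=\Lie(G)$, since then $H:=G$ satisfies the conclusion and undoing the reductions gives the theorem. So I assume for contradiction that $V\subsetneq\Lie(G)$. Fix a number field $K$ over which $G$, a basis of $V$ and $\ga$ are defined, let $v_0$ be the place of $K$ induced by the inclusion $\overline{\mathbb Q}\subseteq\mathbb C_p$, and fix an equivariant projective completion of $G$ together with a very ample line bundle $\mathcal L$ on it.

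For the auxiliary construction, fix small $\varepsilon,\eta>0$ with $\dim G>(1+\varepsilon)\dim V+\eta$ and set $T=\lceil D^{1+\varepsilon}\rceil$ and $S=\lceil D^{\eta}\rceil$ for a large integer $D$, so that $D^{\dim G}$ exceeds a fixed multiple of $T^{\dim V}S$ while $T>D$. By Siegel's lemma over $K$ there is a non-zero global section $\Phi$ of $\mathcal L^{\otimes D}$, with coefficients in $\OK$ of controlled height, such that for every $j\in\{0,1,\dots,S-1\}$ the $p$-adic analytic function $z\mapsto\Phi(\Exp_G(z+j\w))$ on a neighbourhood of $0$ in $V_F$ vanishes to order at least $T$ at $z=0$; here $\Exp_G$ is the inverse of $\log_{G(F)}$, so $\Exp_G(j\w)=j\ga$. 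The essential $p$-adic input is a bound $|a_{\mathbf n}|_{v_0}\leq C^{|\mathbf n|}$ for the Taylor coefficients of $\Exp_G$ in suitable local coordinates, coming from the formal group of $G$; since $\ga\in G(F)_f$ means precisely that $\w$ lies strictly inside the region of convergence, one has $C\,|\w|_{v_0}<1$, and hence $t\mapsto\Phi(\Exp_G(t\w))$ -- together with its analogues in the directions of $V$ -- extends to a $p$-adic analytic function on a disc of some radius $R>1$ with $v_0$-norm at most $1$. This room $R>1$ plays the role of the Schwarz lemma over $\mathbb C$.

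The extrapolation then runs as follows. Since $t\mapsto\Phi(\Exp_G(t\w))$ vanishes to order $T$ along $V$ at $t=0,1,\dots,S-1$, dividing by $\prod_{j=0}^{S-1}(t-j)^T$ and using the multiplicativity of the Gauss norm on the disc of radius $R$ shows that, for every integer $t_0\geq S$, each $V$-directional derivative of $\Phi$ of order $<T$ at the point $t_0\ga$, suitably normalized, has $v_0$-absolute value at most $R^{-ST}$. On the other hand such a quantity lies in $K$ and is a polynomial with coefficients in $K$ in the coordinates of $t_0\ga$, the coefficients of $\Phi$, and finitely many Taylor coefficients of $\Exp_G$ at the origin, so its absolute values at all other places of $K$ are bounded by $\exp(c(t_0Dh(\ga)+D\log D+T))$ for some constant $c$; for $t_0$ in the range where $ST\log R$ dominates, the product formula forces it to be $0$. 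Hence $\Phi$ vanishes to order at least $T$ along $V$ at $j\ga$ for all $j$ in a larger range $\{0,\dots,S'-1\}$, and since $T>D$ this step may be iterated, so $S'$ can be taken as large as we please, in particular much larger than $D^{\dim G}$.

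Finally I would invoke the multiplicity estimate for commutative algebraic groups, in the form established in \cite{fuchs-pham}: a non-zero section of $\mathcal L^{\otimes D}$ vanishing to order $T$ along $V$ at the points $j\ga$, $0\leq j<S'$, forces a connected algebraic subgroup $G^{*}\subsetneq G$ that accounts for the vanishing; but $\ga$ generating a Zariski-dense subgroup of $G$ forces $\ga\bmod G^{*}$ to have infinite order, so the points $j\ga$ are pairwise distinct modulo $G^{*}$, and the ensuing inequality bounding $S'$ in terms of $D^{\dim G}$ contradicts $S'$ having been chosen that large. This contradiction proves $V=\Lie(G)$, and undoing the reductions yields the theorem. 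The deepest ingredient, controlling the admissible ranges of $D,T,S,S'$, is the multiplicity estimate; the step where the argument genuinely differs from the complex proof is the analytic extrapolation, where one must track the $v_0$-adic sizes of the Taylor coefficients of the formal exponential and exploit the room $R>1$ furnished by $\ga\in G(F)_f$.
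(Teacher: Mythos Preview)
Your strategy differs structurally from the paper's, and the place where it breaks is the endgame. The paper does \emph{not} reduce to Zariski-density of $\ga$; it reduces (by induction on $\dim G$ and Lemma~\ref{lss}) to the case where the pair $(G,V)$ is \emph{semistable}, and then runs the transcendence machine once (no iteration) with fixed parameters $S_0,S,D,T$. The semistability hypothesis is exactly what lets one invoke the multiplicity estimate in the clean form \cite[Theorem~6.14]{aw}: under semistability, high-order vanishing of a non-trivial section along $V$ at the points $0,\gamma,\ldots,(S-1)\gamma$ forces two of those points to coincide, i.e.\ $\gamma$ is torsion, which is the sought contradiction. Your Zariski-density reduction does not deliver semistability, so you are forced to use the general (Philippon-type) zero estimate, whose output is an inequality of the shape $S'\cdot T^{\dim V-\dim(V\cap\Lie G^{*})}\le C\,D^{\dim G-\dim G^{*}}$ for some connected $G^{*}\subsetneq G$. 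Zariski-density of $\gamma$ only guarantees that the cosets $j\gamma\bmod G^{*}$ are distinct; it gives you no control over $\dim(V\cap\Lie G^{*})$, and your minimality of $V$ does not either. For intermediate $G^{*}$ (neither $0$ nor containing $V$ in its Lie algebra) this inequality need not be violated by your parameters.

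You try to sidestep this by iterating the extrapolation until $S'\gg D^{\dim G}$, but that step fails as written. For a general commutative $G$ (in particular for abelian varieties) the height of $t_0\gamma$ grows like $t_0^{2}h(\gamma)$, not $t_0 h(\gamma)$; this is why the paper's lower bound in Proposition~\ref{lowerbound1} carries a term $DS^{2}h$. With the correct quadratic growth, each extrapolation step gives $S_{k+1}\lesssim (S_kT/(Dh))^{1/2}$, so the iteration stabilises at $S'\sim T/(Dh)$ rather than tending to infinity; and choosing $T$ large enough to make $T/(Dh)\gg D^{\dim G}$ destroys the Siegel inequality $D^{\dim G}\gg S_0T^{\dim V}$. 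So the claim ``$S'$ can be taken as large as we please'' is wrong, and without it the contradiction with the general zero estimate is not available. The missing idea is precisely the semistability reduction: once $(G,V)$ is semistable, a single extrapolation $S_0\mapsto S=c^2S_0$ already suffices, because the zero estimate in that case has no obstructing subgroup $G^{*}$ to worry about.
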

Replacing $G$ by its identity component $G^0$ if necessary, and noting that $\Lie(G)=\Lie (G^0)$, we may, without loss of generality, assume in the proof of Theorem \ref{past} that $G$ is connected.

As already mentioned above, the theorem has been obtained by Matev (see \cite [Theorem 1]{matev}). However, we remark that his paper is still only available as a preprint and that our proof below has been worked out independently and roughly at the same time as Matev has obtained his one. Because of this we find it worth to present also our version of the result. The proof of the theorem will be given in Section \ref{propast} by using Proposition \ref{ss} below. We get the following corollaries of the theorem whose proof will also be given in Section \ref{proof}.
\begin{corollary}\label{cor1}
Let $G$ be a commutative algebraic group of positive dimension defined over $\overline{\mathbb Q}$ and $T$ a non-empty subset of $G_f(\overline{\mathbb Q})$. Denote by $W\subseteq \Lie(G(F))$ the $\overline{\mathbb Q}$-vector space generated by all elements $\log_{G(F)}(\gamma)$ for $\gamma\in T$. Then there exists an algebraic subgroup $H$ of $G$ defined over $\overline{\mathbb Q}$ such that $\Lie(H)=W$.
\end{corollary}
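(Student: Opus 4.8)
The plan is to derive the corollary from Theorem \ref{past} one algebraic point at a time and then amalgamate the resulting subgroups. Throughout I take $W$ to be the smallest $\overline{\mathbb Q}$-linear subspace $V\subseteq\Lie(G)$ for which $\log_{G(F)}(\gamma)\in V_F$ holds for every $\gamma\in T$; such a smallest subspace exists, since if $V$ and $V'$ both have this property then so does $V\cap V'$, using that $(V\cap V')_F=V_F\cap V'_F$ inside $\Lie(G(F))=\Lie(G)\otimes_{\overline{\mathbb Q}}F$ (flatness of $F$ over $\overline{\mathbb Q}$). For $\gamma\in T$ let $V_\gamma\subseteq\Lie(G)$ be the smallest $\overline{\mathbb Q}$-subspace with $\log_{G(F)}(\gamma)\in(V_\gamma)_F$; then one checks $W=\sum_{\gamma\in T}V_\gamma$, and since $\dim_{\overline{\mathbb Q}}\Lie(G)<\infty$ this sum is already attained on a finite subset, so I may assume $T=\{\gamma_1,\dots,\gamma_m\}$ is finite.

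The core step is to realise each $V_{\gamma_i}$ as the Lie algebra of an algebraic subgroup of $G$ defined over $\overline{\mathbb Q}$. If $\log_{G(F)}(\gamma_i)=0$ then $V_{\gamma_i}=0$ and I take $H_i=\{e\}$. Otherwise $0\neq\log_{G(F)}(\gamma_i)\in(V_{\gamma_i})_F$, so Theorem \ref{past} applied with $V=V_{\gamma_i}$ produces an algebraic subgroup $H_i\subseteq G$ of positive dimension, defined over $\overline{\mathbb Q}$, with $\Lie(H_i)\subseteq V_{\gamma_i}$ and $\gamma_i\in H_i(\overline{\mathbb Q})$. Since $\gamma_i\in G(F)_f$ by hypothesis and $\gamma_i\in H_i$, compatibility of the logarithm with the inclusion $H_i\hookrightarrow G$ (Section \ref{Log}) gives $\gamma_i\in H_i(F)_f$ and $\log_{G(F)}(\gamma_i)=\log_{H_i(F)}(\gamma_i)\in\Lie(H_i)_F$; by the minimality defining $V_{\gamma_i}$ this forces $V_{\gamma_i}\subseteq\Lie(H_i)$, whence $\Lie(H_i)=V_{\gamma_i}$.

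Finally I would let $H$ be the algebraic subgroup of $G$ generated by $H_1,\dots,H_m$, that is, the image of the multiplication morphism $H_1\times\cdots\times H_m\to G$. Because $G$ is commutative this morphism is a homomorphism, so its image is a closed subgroup of $G$ defined over $\overline{\mathbb Q}$; being a surjective homomorphism of smooth algebraic groups in characteristic zero it induces a surjection on Lie algebras, and its differential at the identity is $(\xi_1,\dots,\xi_m)\mapsto\xi_1+\cdots+\xi_m$. Therefore $\Lie(H)=\Lie(H_1)+\cdots+\Lie(H_m)=V_{\gamma_1}+\cdots+V_{\gamma_m}=W$, which is what was wanted.

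I expect the only real obstacle to be the behaviour of the logarithm under restriction to an algebraic subgroup, namely that $G(F)_f$ and $\log_{G(F)}$ restrict compatibly to $H_i$, so that $\gamma_i\in H_i(F)_f$ and $\log_{G(F)}(\gamma_i)=\log_{H_i(F)}(\gamma_i)$ in $\Lie(H_i)_F\subseteq\Lie(G(F))$; this is exactly the kind of functoriality that should be recorded in Section \ref{Log}. Once it is available, the remainder is just the minimality observation above together with standard facts about images of homomorphisms of commutative algebraic groups.
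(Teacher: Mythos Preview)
Your proof is correct and follows essentially the same route as the paper's: reduce to finitely many points, apply Theorem~\ref{past} to each, use the functoriality of the logarithm from Section~\ref{Log} to see that $\log_{G(F)}(\gamma_i)\in\Lie(H_i)_F$, and then take the product $H=H_1\cdots H_m$ with $\Lie(H)=\sum_i\Lie(H_i)$. The only organisational difference is that the paper applies Theorem~\ref{past} with $V=W$ for every $\gamma_i$ (choosing the $\gamma_i$ so that the $\log_{G(F)}(\gamma_i)$ form a $\overline{\mathbb Q}$-basis of $W$) and then concludes $\Lie(H)=W$ by a dimension comparison at the end, whereas you apply Theorem~\ref{past} with the minimal $V_{\gamma_i}$ and use that minimality to pin down $\Lie(H_i)=V_{\gamma_i}$ already at the individual level; this is a harmless variation.
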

\begin{corollary}\label{cor2}
Let $\varphi: X\rightarrow G$ be a homomorphism of commutative algebraic groups defined over $\overline{\mathbb Q}$ with $\dim G>0$ and $\varphi_F: X(F)\rightarrow G(F)$ the homomorphism of Lie groups over $F$ induced by $\varphi$.
If $x$ is a point in $X(F)$ such that $\varphi_F(x)$ is a non-torsion point in $G_f(\overline{\mathbb Q})$, then there exists an algebraic subgroup $H$ of $G$ of positive dimension defined over $\overline{\mathbb Q}$ satisfying:
\begin{compactitem}\item[\rm 1.] $\varphi_F(x)\in H,$ \item[\rm 2.] $\Lie(H)\subseteq \varphi_{*}(\Lie(X))$, here $\varphi_*$ is the tangent map induced by $\varphi$. \end{compactitem}
\end{corollary}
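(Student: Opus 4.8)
The plan is to deduce Corollary \ref{cor2} directly from Theorem \ref{past}. Write $\gamma:=\varphi_F(x)$, so that by hypothesis $\gamma\in G_f(\overline{\mathbb Q})$ and $\gamma$ is not a torsion point. First I would check that $\log_{G(F)}(\gamma)\neq 0$. Indeed $\log_{G(F)}$ is a group homomorphism into the $F$-vector space $\Lie(G(F))$, which is torsion-free, and (from the construction recalled in Section \ref{Log}) its kernel contains only torsion points of $G(F)_f$; hence a non-torsion $\gamma$ has non-zero logarithm.

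Next I would identify the subspace of $\Lie(G)$ to which Theorem \ref{past} is to be applied, namely $V:=\varphi_*(\Lie(X))$, and show $\log_{G(F)}(\gamma)\in V_F$. Let $X':=\varphi(X)$ be the image of $\varphi$. Since we are in characteristic $0$, the surjection $X\twoheadrightarrow X'$ is smooth, so $X'$ is an algebraic subgroup of $G$ defined over $\overline{\mathbb Q}$ with $\Lie(X')=\varphi_*(\Lie(X))=V$. As $\varphi$ factors through $X'$, on $F$-points we get $\gamma=\varphi_F(x)\in X'(F)$, and therefore $\gamma\in X'(F)\cap G(F)_f$. Because the $p$-adic logarithm is given by the formal group logarithm, which restricts compatibly to closed subgroups --- equivalently, by functoriality of $\log$ with respect to the closed immersion $X'\hookrightarrow G$ --- the element $\log_{G(F)}(\gamma)$ lies in $\Lie(X')\otimes_{\overline{\mathbb Q}}F=V_F$.

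Combining these two observations, $0\neq\log_{G(F)}(\gamma)\in V_F$; in particular $V$ is a non-trivial $\overline{\mathbb Q}$-linear subspace of $\Lie(G)$. Applying Theorem \ref{past} to $G$, $V$ and $\gamma$ then yields an algebraic subgroup $H\subseteq G$ of positive dimension, defined over $\overline{\mathbb Q}$, with $\Lie(H)\subseteq V=\varphi_*(\Lie(X))$ and $\gamma\in H(\overline{\mathbb Q})$. Then $\varphi_F(x)=\gamma\in H$ is property~1 and $\Lie(H)\subseteq\varphi_*(\Lie(X))$ is property~2, so $H$ is as required.

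Most of this is formal. The one step I would treat with care --- leaning on the precise construction of $G(F)_f$ and $\log_{G(F)}$ in Section \ref{Log} --- is the compatibility invoked in the second paragraph: that $\log_{G(F)}$ is defined on $X'(F)\cap G(F)_f$ and takes its values in the Lie subalgebra attached to the closed subgroup $X'$, i.e.\ that the assignment $G\mapsto(G(F)_f,\log_{G(F)})$ is compatible with passage to closed algebraic subgroups. I expect this (minor) point, rather than the invocation of Theorem \ref{past} itself, to be the main obstacle.
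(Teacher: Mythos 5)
Your argument is correct, but you reach $\log_{G(F)}(\gamma)\in V_F$ (with $\gamma=\varphi_F(x)$ and $V=\varphi_*(\Lie X)$) by a different route from the paper's. The paper applies the commutative square for $\log$ to the homomorphism $\varphi\colon X\to G$ itself, writing $\log_{G(F)}(\gamma)=(\varphi_F)_*\bigl(\log_{X(F)}(x)\bigr)\in\varphi_*(\Lie X)\otimes_{\overline{\mathbb Q}}F$, whereas you replace $X$ by its image $X'=\varphi(X)\subseteq G$, note $\Lie(X')=\varphi_*(\Lie X)$, observe $\gamma\in X'(F)\cap G(F)_f$, and invoke the same functoriality of $\log$ for the closed immersion $X'\hookrightarrow G$. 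Your detour is actually the more defensible one: the paper's formula needs $\log_{X(F)}(x)$ to be defined, i.e.\ $x\in X(F)_f$, but the corollary's hypotheses only give $x\in X(F)$ together with $\varphi_F(x)\in G(F)_f$, and in general these do not force $x\in X(F)_f$ (project $\mathbb G_m^2\to\mathbb G_m$ onto the first factor and take $x=(a,b)$ with $a\equiv 1$ modulo a high power of $p$, non-torsion, and $b=p$). Your version only needs $\gamma\in X'(F)\cap G(F)_f\Rightarrow\gamma\in X'(F)_f$, which is immediate since $X'(F)$ is closed in $G(F)$ with the subspace topology. So you pay the small overhead of introducing the image subgroup $X'$ and gain an argument that genuinely matches the stated hypotheses; the paper's shorter proof implicitly strengthens the hypothesis to $x\in X(F)_f$.
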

To prove the $p$-adic analytic subgroup theorem, we shall reduce it to the semistable case (for the definition of semistability see Section \ref{semi}; compare also with \cite[Theorem 5]{matev} and \cite[Lemma 6]{matev}). In this case we have the following proposition.
\begin{proposition}\label{ss}
Let $G$ be a commutative algebraic group defined over $\overline{\mathbb Q}$ of positive dimension and $V$ a proper
$\overline{\mathbb Q}$-linear subspace in $\Lie(G)$, i.e. $V$ is non-trivial and $V\neq \Lie(G)$, such that $(G,V)$ is semistable over $\overline{\mathbb Q}$. Then for any non-torsion point $\gamma$ in $G_f(\overline{\mathbb Q})$ the element $\log_{G(F)}(\gamma)$ does not belong to $V_F$.
\end{proposition}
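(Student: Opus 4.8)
The plan is to run the classical Gelfond--Baker transcendence machinery in its $p$-adic form, following \cite{fuchs-pham}, and to argue by contradiction. So suppose there were a non-torsion point $\gamma\in G_f(\overline{\mathbb Q})$ with $u:=\log_{G(F)}(\gamma)\in V_F$; necessarily $u\neq 0$ since $\gamma$ is non-torsion. First I would fix a number field $K\subseteq\overline{\mathbb Q}$ over which $G$, $V$ and $\gamma$ are defined, a basis $w_1,\dots,w_\ell$ of $V$ consisting of $\overline{\mathbb Q}$-rational invariant derivations (so $\ell=\dim V<\dim G$, using that $V$ is proper), and a symmetric very ample line bundle on $G$ giving a projective embedding over $K$. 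Writing $u=\mu_1w_1+\dots+\mu_\ell w_\ell$ with $\mu_i\in F=\mathbb C_p$, the one-parameter analytic homomorphism $\varphi(z)=\exp_{G(F)}(zu)$ converges on a $p$-adic disc $\mathscr D$ about $0$ that contains $1$ (because $\gamma\in G(F)_f$), with $\varphi(n)=n\gamma$ for every rational integer $n$ lying in $\mathscr D$; after replacing $u$ by $p^{k}u$ and $\gamma$ by $p^{k}\gamma$ for a suitable $k\ge 0$ I may moreover assume $\mathscr D$ contains a long initial segment of $\mathbb Z$ and has radius $>1$ --- this rescaling is exactly the first place where the $p$-adic situation forces a deviation from the complex one.

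Next I would construct, by Siegel's lemma over $K$, a nonzero global section $P$ of the $D$-th tensor power of the chosen line bundle, of logarithmic height $O\!\left(D\log(DT)\right)$, with $(\p^{I}P)(s\gamma)=0$ for all integers $0\le s<S$ and all multi-indices $I$ on $w_1,\dots,w_\ell$ with $|I|<T$, where $D,T,S$ are integer parameters to be fixed at the very end. This is a system of $\asymp S\,T^{\ell}$ linear equations over $\overline{\mathbb Q}$ in the $\asymp D^{\dim G}$ coefficients of $P$, so it is solvable as soon as $D^{\dim G}\gg S\,T^{\ell}$ --- and this inequality is feasible with $S$ and $T$ both large precisely because $\ell=\dim V<\dim G$. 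The point of only differentiating along $V$ is that it is legitimate exactly because $u\in V_F$: the one-variable function $g(z):=P(\varphi(z))$ then automatically satisfies $g(s)=g'(s)=\dots=g^{(T-1)}(s)=0$ for $0\le s<S$, since each $z$-derivative of $g$ is a combination, with coefficients built from the $\mu_i$, of the numbers $(\p^{I}P)(\varphi(z))$ with $|I|$ at most the order of the derivative.

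Then comes the extrapolation step, where a $p$-adic Schwarz lemma replaces the complex one. Since $g$ is $p$-adic analytic on $\mathscr D$, of radius $>1$, while the integers $0,\dots,S-1$ all lie in the disc of radius $1$ on which $g$ has at least $ST$ zeros counted with multiplicity, the Newton-polygon / Weierstrass-preparation estimate yields a genuine contraction factor $\theta<1$ and hence a bound $|g^{(t)}(s)|_p\le\theta^{\,ST}e^{cD\log(DT)}$ for all $0\le t<T'$ and all integers $|s|<S'$, with a slightly reduced order $T'$. On the other hand each $g^{(t)}(s)$ with $t<T$ is, up to a nonzero rational factor and bounded powers of the $\mu_i$, a $\overline{\mathbb Q}$-linear combination of the algebraic numbers $(\p^{I}P)(s\gamma)$, of degree $\le[K:\mathbb Q]$ and of logarithmic height $O\!\left(D\log(DT)\right)$ plus the (polynomially growing) contribution of the coordinates of $s\gamma$; so by the Liouville inequality in $\mathbb C_p$ each such number is either $0$ or of $p$-adic absolute value bounded below in terms of its height. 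Choosing the parameters so that $\theta^{\,ST}$ beats all the height contributions up to $s<S'$ forces $g^{(t)}(s)=0$ for $0\le t<T'$, $|s|<S'$; iterating the extrapolation a bounded number of times, one concludes that $P$ vanishes to order $\ge T'$ along the line $Fu\subseteq V_F$ at each of the $S'$ points $0,\gamma,\dots,(S'-1)\gamma$, with $S'(T')^{\ell}\gg D^{\dim G}$.

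Finally I would invoke Philippon's multiplicity (zero) estimate for the commutative algebraic group $G$: a section of the $D$-th power of the line bundle vanishing to such a high order along $Fu$ at so many multiples of $\gamma$ cannot exist unless there is a proper connected algebraic subgroup $G^{*}\subsetneq G$ defined over $\overline{\mathbb Q}$ through which the obstruction factors, together with an explicit numerical inequality relating the degree of $G^{*}$, the number of cosets of $G^{*}$ met by the points $s\gamma$, the position of $u$ (equivalently of $V$) relative to $\Lie(G^{*})$, and the parameters $D,T',S'$. It is exactly here that the semistability of $(G,V)$ over $\overline{\mathbb Q}$ enters: it supplies, on every proper subquotient, the slope inequality that contradicts the one coming from the zero estimate, so that no such $G^{*}$ can exist --- and this contradiction proves the proposition. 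I expect the last two steps to carry the main difficulty: the $p$-adic extrapolation has to be calibrated with the correct convergence disc of $\varphi$ and growth estimate for $g$ (this is where $p$-adic analysis genuinely departs from the complex theory), and the truly delicate point is to choose $D,T,S$ and the reduced $T',S'$ so that \emph{simultaneously} Siegel's lemma applies, the Liouville inequality is beaten through all $S'$ translates, Philippon's estimate becomes applicable, and the semistability inequality is violated --- making these four constraints mutually compatible with all implied constants under control is the crux; the rest is routine once the framework of \cite{fuchs-pham} is in place.
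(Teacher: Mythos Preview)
Your global strategy --- contradiction, Siegel's lemma, $p$-adic Schwarz extrapolation, Liouville lower bound, zero estimate with semistability --- is exactly the paper's. There is, however, a genuine gap in the extrapolation/Liouville step. You apply Schwarz to $g(z)=P(\varphi(z))$ and bound $|g^{(t)}(s)|_p$ from above; but $g^{(t)}(s)=\sum_{|I|=t}\binom{t}{I}\mu^{I}(\partial^{I}P)(s\gamma)$ is \emph{not} an algebraic number, since the $\mu_i\in\mathbb C_p$ are in general transcendental. Liouville's inequality therefore does not apply to $g^{(t)}(s)$, and the smallness of $|g^{(t)}(s)|_p$ says nothing about the individual algebraic numbers $(\partial^{I}P)(s\gamma)$: in the ultrametric world one cannot pass from smallness of a $\mathbb C_p$-linear combination to smallness of its terms. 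So you cannot force $(\partial^{I}P)(s\gamma)=0$ on the enlarged range, and you end up, as you yourself write, only with vanishing along the one-dimensional line $Fu$, which is not defined over $\overline{\mathbb Q}$. The zero estimate together with semistability of $(G,V)$ needs vanishing along $V$; with vanishing along a line the relevant count is $S'T'$ rather than the $S'(T')^{\dim V}$ you invoke, and that is incompatible with the Siegel bound $D^{\dim G}\gg S\,T^{\dim V}$.

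The paper's remedy is to extrapolate, for each multi-index $t$ with $|t|<T$, the function $f_t(z):=(\Delta^{t}\Phi)(zu)$ separately. Its $z$-derivatives $f_t^{(\tau)}(s)$ still involve the $\mu_i$, but they vanish for $0\le s<S_0$ and $\tau<T$ by the multinomial expansion, because the construction imposes $(\Delta^{t'}\Phi)(su)=0$ whenever each component $t'_i<2T$ (this is precisely why Proposition~\ref{sg1} asks for $t_1,\dots,t_k<2T$ rather than merely $|t|<T$). Schwarz then bounds $|f_t(s)|_p=|(\Delta^{t}\Phi)(su)|_p$ for $0\le s<S$; \emph{this} number is algebraic --- a $K$-rational derivation of $P$ evaluated at the algebraic point $\gamma^{s}$ --- so Liouville applies and yields $(\Delta^{t}\Phi)(su)=0$ for all $|t|<T$ and $0\le s<S$, i.e.\ vanishing along $V$ at all $S$ points after a single extrapolation step (no iteration is used). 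The semistable zero estimate \cite[Theorem~6.14]{aw} then forces $\gamma$ to be torsion.
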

The proposition plays a crucial role for proving Theorem \ref{past}, and the next sections up to Section \ref{pross} are devoted to prove it.

\section{Preliminaries}

\subsection{Exponential and logarithm map over $p$-adic fields}\label{Log}

Throughout this section, we denote by $F$ a complete subfield of $\mathbb C_p$. Let $G$ be a commutative algebraic group defined over $F$ and $\Lie(G)$ its Lie algebra. It is known that the set of $F$-points has a structure of Lie group over $F$ and its Lie algebra coincides with $\Lie(G)$. Furthermore, if $\varphi: G\rightarrow H$ is a homomorphism of commutative algebraic groups defined over $F$ with the tangent map $\varphi_*: \Lie(G)\rightarrow \Lie(H)$, then it induces a homomorphism of Lie groups over $F$ from $G(F)$ to $H(F)$ whose differential map is $\varphi_*$.
Recall that by \cite[Chapter III, 7.6]{Bourbaki} there is a $F$-analytic homomorphism the so-called logarithm map $\log_{G(F)}: G(F)_f\rightarrow \Lie(G)$ where $G(F)_f$ is the set of $x\in G(F)$ for which there exists a strictly increasing sequence $(n_i)$ of integers such that $x^{n_i}$ tends to the unity element of $G(F)$ as $i$ tends to $\infty$. One can show that $G(F)_f$ is an open subgroup of $G(F)$ such that the quotient group $G(F)/G(F)_f$ has no non-zero torsion elements. We also note that there is an open subgroup $U$ of $\Lie(G)$ and a $F$-analytic homomorphism $\exp_{G(F)}: U\rightarrow \exp_{G(F)}(U)\subseteq G(F)_f$ such that $\exp_{G(F)}$ is the inverse map of the map which is the restriction of $\log_{G(F)}$ to $\exp_{G(F)}(U)$. In particular, this shows that $\log_{G(F)}(\gamma)=0$ with $\gamma\in G(F)_f$ if and only if $\gamma$ is a torsion point.
We list below some important properties of logarithm maps (see \cite{z}):
\\
1. For commutative algebraic groups $G,H$ defined over $F$, the logarithm map $\log_{(G\times H)(F)}$
is the map $\log_{G(F)}\times \log_{H(F)}$ with $(G\times H)(F)_f=G(F)_f\times H(F)_f$ and with $\Lie((G\times H)(F))=\Lie(G(F))\times \Lie(H(F))$.
\\
2. For a homomorphism $\varphi: G\rightarrow H$ of commutative algebraic groups defined over $F$, the set $\varphi (G(F)_f)$ is contained in $H(F)_f$ and the diagram
\begin{displaymath}
\xymatrix{
G(F)_f\ar[r]^{\varphi}\ar[d]_{\log_{G(F)}}&
H(F)_f\ar[d]^{\log_{H(F)}}\\
\Lie(G(F))\ar[r]_{\varphi_*}&\Lie(H(F))}
\end{displaymath}
commutes.

\subsection{Semistability}\label{semi}

In this section, $K$ denotes an arbitrary field of characteristic $0$. Let $G$ be a commutative algebraic group defined over $K$ and $V$ a $K$-linear subspace of $\Lie(G)$. We recall the notion ``semistable" which is due to W\"ustholz (see \cite[Chapter 6]{aw}). We associate with $(G,V)$ the index
\begin{equation*}
\tau(G,V):=
\begin{cases} \dfrac{\dim V}{\dim G}\;\;\;\; {\textnormal {\rm if }} \dim G>0,\\
1 \hspace*{1.35cm}{\textnormal{\rm otherwise}}.
\end{cases}
\end{equation*}
The pair $(G,V)$ is called {semistable (over $K$)} if for any proper quotient $\pi: G\rightarrow H$ defined over $K$, we have $\tau(G,V)\leq \tau(H,\pi_*(V))$ where $\pi_*:\Lie(G)\rightarrow \Lie(H)$ is the tangent map induced by $\pi$.

\begin{lemma}\label{lss}
If $(G,V)$ is not semistable then there exists a proper quotient $\pi: G\rightarrow G'$ such that $(G',\pi_*(V))$ is semistable.
\end{lemma}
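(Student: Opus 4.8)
The plan is to argue by induction on $\dim G$ and to pick, among all proper quotients, one that is ``worst'' for the index $\tau$. Concretely, suppose $(G,V)$ is not semistable, so the set of proper quotients $\pi\colon G\to H$ (defined over $K$) for which $\tau(H,\pi_*(V))<\tau(G,V)$ is non-empty. Since the ratios $\tau(H,\pi_*(V))=\dim\pi_*(V)/\dim H$ range over a finite set of rationals with bounded denominators (the dimension of $H$ is an integer between $1$ and $\dim G-1$, or $0$), one may choose a proper quotient $\pi\colon G\to G'$ minimising $\tau(H,\pi_*(V))$ over all proper quotients; if there are ties, choose one with $\dim G'$ minimal. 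I claim this $(G',\pi_*(V))$ is semistable. Write $V'=\pi_*(V)$.

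First I would record the transitivity of quotients: every proper quotient $\rho\colon G'\to G''$ composes with $\pi$ to give a quotient $\rho\circ\pi\colon G\to G''$ of $G$, and $(\rho\circ\pi)_*(V)=\rho_*(V')$. This is where one must be slightly careful: $\rho\circ\pi$ need not be a \emph{proper} quotient of $G$ if $G''$ is trivial, i.e. if $\dim G''=0$. So I split into two cases. If $\dim G''>0$, then $\rho\circ\pi$ is a genuine proper quotient of $G$, hence by minimality of $\tau(G',V')$ we get $\tau(G',V')\le\tau(G'',\rho_*(V'))$, which is exactly what semistability of $(G',V')$ demands for this quotient. If $\dim G''=0$, then $\tau(G'',\rho_*(V'))=1$ by definition, and it suffices to observe that $\tau(G',V')\le 1$ always (since $V'\subseteq\Lie(G')$ forces $\dim V'\le\dim G'$); so the inequality holds trivially in that case too.

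The main obstacle — and the reason the minimality argument needs the tie-breaking clause — is ensuring the chosen quotient $G'$ is itself a \emph{proper} quotient of $G$, i.e. $\dim G'>0$ and $G'\ne G$. Properness on the large side ($G'\ne G$, equivalently the quotient is not an isomorphism) is automatic since we only range over proper quotients. The possibility $\dim G'=0$ is excluded because in that case $\tau(G',V')=1\ge\tau(G,V)$, contradicting that $\pi$ was chosen among the quotients \emph{strictly lowering} $\tau$; so the minimiser necessarily has positive dimension. One should also check that the trivial quotient does not sneak in: the identity $G\to G$ is not proper, and the quotient to a point has index $1$, so neither can be the minimiser when $(G,V)$ fails semistability. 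With these checks in place the verification above shows $(G',V')$ is semistable, completing the proof; no real computation is needed beyond the elementary inequality $\dim\pi_*(V)\le\min\{\dim V,\dim H\}$ and the functoriality of the tangent map under composition.
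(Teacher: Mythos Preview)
Your argument is correct and is essentially the same as the paper's: both select a proper quotient $\pi\colon G\to G'$ minimising $\tau$ (the paper restricts to quotients with $\tau<\tau(G,V)$, you minimise over all proper quotients---the minimiser is the same) and then use that any further proper quotient $\rho\colon G'\to G''$ composes to a proper quotient of $G$, so minimality forces $\tau(G',V')\le\tau(G'',\rho_*(V'))$. The paper phrases this last step as a proof by contradiction, you phrase it as a direct verification; the content is identical. Two minor remarks: you announce ``induction on $\dim G$'' but never actually induct, and the tie-breaking clause on $\dim G'$ is never used (you yourself rule out $\dim G'=0$ via $\tau=1$), so both can be dropped.
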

\begin{proof}
For $H$ an algebraic group defined over $K$ and $W$ a $K$-linear subspace of $\Lie(G)$, we say that $(H,W)\prec (G,V)$ if there is a proper quotient $\pi: G\rightarrow H$ defined over $K$ such that $\pi_*(V)=W$ and $\tau(H,W)<\tau(G,V)$. Let $\Sigma$ be the set defined by
$$\Sigma:=\{(H,W); (H,W)\prec (G,V)\}.$$
Since $(G,V)$ is not semistable the set $\Sigma$ is non-empty. We observe that the set $\{\tau(H,W); (H,W)\in \Sigma\}$ is finite, and this shows that there is an element $(G',V')$ in $\Sigma$ such that
$$\tau(G',V')=\min_{(H,W)\in\Sigma}\tau(H,W).$$
Assume now that $(G',V')$ is not semistable then there exists a proper quotient $\pi': G'\rightarrow H$ defined over $K$ such that
$$\tau(G',V')>\tau(H,\pi'_*(V')).$$
Since $(G',V')\prec (G,V)$ there is a proper quotient $\pi: G\rightarrow G'$ defined over $K$ such that $\pi_*(V)=V'$. Let $q:G\rightarrow H$ be the composition of $\pi'$ with $\pi$. Then $q$ is also a proper quotient and
$$q_*(V)=(\pi'\circ \pi)_*(V)=\pi'_*(\pi_*(V))=\pi'_*(V').$$
This shows that
$$\tau(H,q_*(V))=\tau(H,\pi'_*(V'))<\tau(G',V')<\tau(G,V),$$
in other words $(H,q_*(V))\in \Sigma$. It follows that
$$\tau(G',V')\leq \tau(H,q_*(V))=\tau (H,\pi'_*(V)).$$
This contradicts the inequality $\tau(G',V')>\tau(H,\pi'_*(V'))$ above, and the lemma is proved.
\end{proof}

\subsection{$p$-adic Schwarz lemma}

For later use we quote the following proposition, which is a $p$-adic version of the Schwarz lemma. For any complete subfield $F$ of $\mathbb C_p$ and for any real number $R\geq 0$ we set $B_F(R):=\{x\in F; |x|_p<R\}$ and $\overline B_F(R):=\{x\in F; |x|_p\leq R\}$; we will skip the subscript $F$ when $F=\mathbb C_p$. Let $f(x)=\sum_na_nx^n$ be an analytic function on $\overline B(r)$ with $r>0$. We define $|f|_r:=\sup_{n}|a_n|_pr^n=\max_{n}|a_n|_pr^n.$ Then we have the following statement.

\begin{lemma}[Proposition 3.3 in \cite{fuchs-pham}]\label{4}
Let $t\geq s$ be positive real numbers, $q$ a positive integer, $f$ an analytic function on $\overline B_F(t)$, and $\Gamma$ a finite subset of $\overline B_F(s)$ of cardinality $l\geq 2$. We define
$$\delta:=\inf\{|\gamma-\gamma'|_p; \gamma, \gamma'\in \Gamma, \gamma\neq \gamma'\}$$
and
$$\mu:=\sup\{|f^{(m)}(\gamma)|_p; m=0,\ldots,q-1, \gamma\in\Gamma\}$$
with $f^{(m)}$ the $m$-th derivative of $f$.
Put $r_p=p^{-\frac{1}{p-1}}$ and assume that $\delta\leq 1$. Then
$$|f|_s\leq \max\left\{\left(\dfrac{s}{t}\right)^{ql}|f|_{t}, \mu \left(\dfrac{s}{\delta}\right)^{ql-1}r_p^{-(q-1)}\right\}.$$
\end{lemma}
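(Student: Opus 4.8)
The plan is to follow the classical scheme for such statements: divide $f$ by the polynomial prescribing the data, and bound separately the quotient and the Hermite interpolation remainder. Put $N:=ql$ and $P(x):=\prod_{\gamma\in\Gamma}(x-\gamma)^{q}$, which is monic of degree $N$ with all its roots in $\overline B_F(s)\subseteq\overline B_F(t)$. Let $R\in F[x]$ with $\deg R<N$ be the Hermite interpolant of $f$ along $\Gamma$ with multiplicity $q$, i.e. $R^{(m)}(\gamma)=f^{(m)}(\gamma)$ for all $\gamma\in\Gamma$ and $0\le m\le q-1$; it exists by linear algebra over $F$. Then $f-R$ vanishes to order $\ge q$ at every $\gamma\in\Gamma$, so dividing out the linear factors one at a time (using that an analytic function on $\overline B_F(t)$ vanishing at a point of $\overline B_F(t)$ is divisible there by the corresponding linear factor) shows that $g:=(f-R)/P$ is analytic on $\overline B_F(t)$. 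Using multiplicativity of $|\cdot|_\rho$, the values $|x-\gamma|_\rho=\rho$ for $\rho\ge s$ (so that $|P|_\rho=\rho^{N}$), and the monotonicity $|g|_s\le|g|_t$, one gets
\[
|Pg|_s=s^{N}|g|_s\le s^{N}|g|_t=\Bigl(\tfrac st\Bigr)^{N}|f-R|_t\le\Bigl(\tfrac st\Bigr)^{N}\max\{|f|_t,\,|R|_t\}.
\]

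The heart of the matter is the estimate for $|R|_\rho$, which I would obtain from the confluent Newton form $R(x)=\sum_{k=1}^{N}f[z_1,\dots,z_k]\prod_{j=1}^{k-1}(x-z_j)$, where $z_1,\dots,z_N$ is the list of nodes in which each $\gamma\in\Gamma$ occupies $q$ consecutive slots. By induction on the number of arguments I would prove
\[
\bigl|f[z_1,\dots,z_k]\bigr|_p\le\mu\,r_p^{-(q-1)}\,\delta^{-(k-1)}\qquad(1\le k\le N).
\]
Indeed, within a single block $f[\gamma,\dots,\gamma]=f^{(m)}(\gamma)/m!$ has absolute value $\le\mu|m!|_p^{-1}\le\mu\,r_p^{-m}\le\mu\,r_p^{-(q-1)}$ because $\mathrm{ord}_p(m!)<m/(p-1)$ and $m\le q-1$; and whenever the first and last arguments are distinct, the recursion $f[z_i,\dots,z_{i+k}]=(f[z_{i+1},\dots,z_{i+k}]-f[z_i,\dots,z_{i+k-1}])/(z_{i+k}-z_i)$ combined with $|z_{i+k}-z_i|_p\ge\delta$ and the ultrametric inequality raises the exponent of $\delta^{-1}$ by exactly one. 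Since $|\prod_{j=1}^{k-1}(x-z_j)|_\rho=\rho^{k-1}$ and $\rho/\delta\ge1$ (indeed $\delta\le s\le\rho$, because $\Gamma$ contains at least two points of $\overline B_F(s)$ and $\delta\le1$), the maximum over $k$ is attained at $k=N$, so $|R|_\rho\le\mu\,r_p^{-(q-1)}(\rho/\delta)^{N-1}$ for $\rho\in\{s,t\}$.

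It then remains to assemble the pieces. From $f=Pg+R$,
\[
|f|_s\le\max\{|Pg|_s,\,|R|_s\}\le\max\Bigl\{\bigl(\tfrac st\bigr)^{N}|f|_t,\ \bigl(\tfrac st\bigr)^{N}|R|_t,\ |R|_s\Bigr\};
\]
since $\bigl(\tfrac st\bigr)^{N}|R|_t\le\mu\,r_p^{-(q-1)}(s/\delta)^{N-1}\cdot\tfrac st\le\mu\,r_p^{-(q-1)}(s/\delta)^{N-1}$ and $|R|_s\le\mu\,r_p^{-(q-1)}(s/\delta)^{N-1}$, one arrives at $|f|_s\le\max\{(s/t)^{N}|f|_t,\ \mu\,r_p^{-(q-1)}(s/\delta)^{N-1}\}$, which is the assertion with $N=ql$ and $N-1=ql-1$.

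I expect the divided-difference bound to be the only genuine obstacle. One must check that listing equal nodes consecutively is precisely what makes the confluent case reduce to a single derivative term (so that ``first argument $=$ last argument'' forces all intermediate arguments equal), and that the $p$-adic factorial estimate $|m!|_p^{-1}\le r_p^{-(q-1)}$ is invoked only in the permitted range $0\le m\le q-1$, where block size $q$ guarantees it. The divisibility step should also be stated explicitly, as it is what licenses the use of multiplicativity of the Gauss norm on the closed ball $\overline B_F(t)$ rather than only on an open one.
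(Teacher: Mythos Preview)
The paper does not prove this lemma; it is quoted without proof from Proposition~3.3 of \cite{fuchs-pham}, so there is no in-paper argument to compare against.

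Your proof is correct and follows the standard route for $p$-adic Schwarz lemmas: write $f=Pg+R$ with $P=\prod_{\gamma}(x-\gamma)^q$ and $R$ the Hermite interpolant, exploit multiplicativity and monotonicity of the Gauss norm to bound $|Pg|_s\le(s/t)^{ql}\max\{|f|_t,|R|_t\}$, and control $|R|_\rho$ via the confluent Newton form. The divided-difference bound $|f[z_i,\dots,z_{i+k-1}]|_p\le\mu\,r_p^{-(q-1)}\delta^{-(k-1)}$ is the crux, and your induction handles both cases cleanly: when first and last nodes coincide the block structure forces all intermediate nodes equal, so one lands on $f^{(m)}(\gamma)/m!$ with $m\le q-1$ and invokes $\mathrm{ord}_p(m!)<m/(p-1)$; when they differ the recursion together with $|z_{i+k-1}-z_i|_p\ge\delta$ increments the power of $\delta^{-1}$. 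The assembly at the end, including the observation that $(s/t)^{ql}|R|_t\le\mu\,r_p^{-(q-1)}(s/\delta)^{ql-1}$, is routine. The only points worth making explicit in a write-up are that the induction runs over all consecutive windows $z_i,\dots,z_{i+k-1}$ (not just those starting at $z_1$), and that the divisibility of $f-R$ by $P$ on the closed disk uses that all roots lie in $\overline B_F(t)$ and have coefficients in $F$.
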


\subsection{Other preliminaries}\label{op}

Let $G$ be a commutative algebraic group defined over an algebraic number field $K$ of dimension $n>0$. It is known that $G$ can be embedded into the projective space $\mathbb P^N$ with some positive integer $N$, so that $G$ is realized biregularly as a quasi-projective subset of $\mathbb{P}^N$, and that this embedding can be chosen such that the group law $G\times G\rightarrow G$ extends to a regular action $G\times\overline{G}\rightarrow\overline{G}$ of $G$ on $\overline{G}$ with $\overline G$ a compactification of $G$ (see \cite[Section 1.3]{se} and \cite[Section I-III]{fw}, where explicit such embeddings using exponential- and Theta-functions are constructed). This allows one to associate to every element of the Lie algebra $\frak g=\Lie(G)$ an invariant vector field on $\overline{G}$ which induces a derivation from $\Gamma(U,\mathcal O_{\overline G})$ to itself; here $U$ is the open affine subset in $\overline G$ defined by $\overline{G}\cap \{X_0\neq 0\}$
with $X_0$ the first projective coordinate on $\mathbb P^N$
 (for this we refer to \cite[Section 1.3]{se} or \cite[Section 2]{w1}). Furthermore, the affine algebra $\Gamma(U,\mathcal O_{\overline G})$ is generated by $\xi_1,\ldots,\xi_N$ over $K$, where
$$\xi_i:=\left(\dfrac{X_i}{X_0}\right)\!\!\bigg\vert_U,\quad \forall i=1,\ldots,N$$
with $X_1,\ldots,X_N$ the other projective coordinates on $\mathbb P^N$. This shows that $K[\xi_1,\ldots,\xi_N]$ is stable under the action of any element in $\frak g.$
We call a map $L:\{1,\ldots,n\}\rightarrow \frak g$ a {basis} if $L(1),\ldots,L(n)$ is a basis for $\frak g$.
It follows from above that the regular vector field $L(j)$ maps $\xi_i$ to an element in $K[\xi_1,\ldots,\xi_N]$ for any $i=1,\ldots,N$ and $j=1,\ldots,n$. In other words, there are polynomials $P_{i,L(j)}$ in $N$ variables with coefficients in $K$ such that
$$L(j)\xi_i=P_{i,L(j)}(\xi_1,\ldots,\xi_N),\quad \forall i=1,\ldots,N, \forall j=1,\ldots,n.$$
This means that
$$\mathcal L_j:=L(j)(\mathcal O_K[\xi_1,\ldots,\xi_N])$$
is an $\mathcal O_K$-module in $K[\xi_1,\ldots,\xi_N]$ for any $j=1,\ldots,n$. Put $\mathcal L=\mathcal L_1+\cdots+\mathcal L_n$ and define
$$\mathcal I_L:=(\mathcal O_K[\xi_1,\ldots,\xi_N]:\mathcal L)=\{t\in \mathcal O_K; t\mathcal L\subset \mathcal O_K[\xi_1,\ldots,\xi_N]\}.$$
Then $\mathcal I_L$ is an ideal of $\mathcal O_K$ and its norm $N_{K:\mathbb Q}(\mathcal I_L)$ is an ideal in $\mathbb Z$ which has to be principal. It takes the form $(\cd)$ for some positive integer $\cd$. We call $\cd$ the {denominator} of $L$.

Denote by $\p_1,\ldots,\p_n$ the canonical basis of $\Lie(K_v^n)$ defined as $\p_ix_j=\delta_{ij}$ for all $i=1,\ldots,n$ and for all $j=1,\ldots,N$, where $\delta_{ij}$ are Kronecker's delta and $x_i$ are the coordinate functions of $K_v^n$. We define the isomorphisms of $K_v$-vector spaces
 $$\p: K_v^n\rightarrow \Lie(K_v^n),\quad x=(x_1,\ldots,x_n)\mapsto x_1\p_1+\cdots+x_n\p_n$$
and
$$\iota: \Lie(K_v^n)\rightarrow \Lie(G(K_v)),\quad \iota(\p_1)=\Dl_1,\ldots,\iota(\p_n)=\Dl_n.$$
We consider now the set $G(K_v)$ of $K_v$-points of $G$. It is known that $G(K_v)$ is a Lie group over $K_v$. By \cite[Chapter III, \S 7]{Bourbaki}, there is a map $\exp$ (which is called exponential map) defined and locally analytic on an open disk $U_v$ of $G(K_v)$. The functions
$$f_i:=\xi_i\circ \Exp,\quad i=1,\ldots,N$$
are analytic on $\Lambda_v:=(\iota\circ \p)^{-1}(U_v)$ in $K_v^n$, where $\Exp=\exp\circ \iota\circ \p$ and satisfy
$$\p_j(f_i)=L(j)(\xi_i)\circ\Exp=P_{i,L(j)}(\xi_1,\ldots,\xi_N)\circ\Exp=P_{i,L(j)}(f_1,\ldots,f_N)$$
for any $i=1,\ldots,N$ and $j=1,\ldots,n$. The map $f_L=(f_1,\ldots,f_N): \Lambda_v\rightarrow K_v^N$ is called the {normalized analytic representation} of the exponential map $\exp$ with respect to the basis $L$. We define
$$\ccc:=\max_{i,j}\deg P_{i,L(j)};\quad e_L:=v(\delta_L);\quad \cc:=\max_{i,j}h(P_{i,L(j)})$$
and
$$\oL:=\max\{1,e_L\}(\cc+\log\cd+\log\ccc),$$
here by convention, $\log \ccc =0$ if $\ccc =0$.

We quote the following two statements from \cite{fuchs-pham}.

\begin{lemma}[Lemma 3.8 in \cite{fuchs-pham}]\label{ww}
Let $L:\{1,\ldots,n\}\rightarrow \frak g$ be a basis and $P(T_1,\ldots,T_N)$ a polynomial in $N$ variables with coefficients in $K$ of total degree $\leq D$. Let $T$ be a non-negative integer and $t=(t_1,\ldots,t_n)\in\mathbb{Z}_{\geq 0}^n$ be such that $T=t_1+\cdots+t_{n}$. There exists a polynomial
$P_t\in K[T_1,\ldots,T_N]$ such that
$$\partial^tP(f_1,\ldots,f_N)=P_t(f_1,\ldots,f_N),$$
satisfying\\
\hspace*{0.8cm} {\rm 1.} $\deg P_t\leq D+T(\ccc-1),$ \\ \hspace*{0.8cm} {\rm 2.} $\log|P_t|_v\ll \log|P|_v+T(\cc+\log (D+T\ccc)), \quad\forall v\in M_K,$\\
where $\partial^m:=\partial_1^{m_1}\cdots \partial_k^{m_k}$ for $m=(m_1,\ldots,m_k)\in\mathbb Z_{\geq 0}^k$ with $0\leq k\leq n$.
\end{lemma}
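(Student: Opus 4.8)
The plan is to transfer the differential operators $\p_1,\ldots,\p_n$ acting on the analytic functions $f_1,\ldots,f_N$ back to purely algebraic derivations of the polynomial ring $K[T_1,\ldots,T_N]$. First I would introduce, for each $j=1,\ldots,n$, the $K$-derivation
\[
D_j:=\sum_{i=1}^{N}P_{i,L(j)}(T_1,\ldots,T_N)\,\frac{\partial}{\partial T_i}\colon K[T_1,\ldots,T_N]\longrightarrow K[T_1,\ldots,T_N],
\]
characterized by $D_jT_i=P_{i,L(j)}$. Since $\p_jf_i=P_{i,L(j)}(f_1,\ldots,f_N)=(D_jT_i)(f_1,\ldots,f_N)$, the two maps $Q\mapsto\p_j\bigl(Q(f_1,\ldots,f_N)\bigr)$ and $Q\mapsto(D_jQ)(f_1,\ldots,f_N)$ are both derivations from $K[T_1,\ldots,T_N]$ into the ring of analytic functions on $\Lambda_v$ (the latter viewed as a $K[T_1,\ldots,T_N]$-module via the substitution homomorphism $T_i\mapsto f_i$) that agree on the generators $T_1,\ldots,T_N$, and hence coincide. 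Setting
\[
P_t:=D_1^{t_1}D_2^{t_2}\cdots D_n^{t_n}\,P\ \in\ K[T_1,\ldots,T_N]
\]
and iterating this identity then yields $\p^tP(f_1,\ldots,f_N)=P_t(f_1,\ldots,f_N)$, the required representation.

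Next I would establish both bounds by induction on $T=t_1+\cdots+t_n$, analyzing a single application of one of the $D_j$. For part 1: applying $D_j$ to a polynomial $Q$ produces a sum of products $\tfrac{\p Q}{\p T_i}\,P_{i,L(j)}$, each of degree at most $(\deg Q-1)+\ccc$; thus starting from $\deg P\le D$ and applying the $D_j$ a total of $T$ times gives $\deg P_t\le D+T(\ccc-1)$.

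For part 2 I would use, at each place $v\in M_K$, the standard inequalities: forming $\partial Q/\partial T_i$ multiplies every coefficient by an integer $\le\deg Q$, so $\log|\partial Q/\partial T_i|_v\le\log|Q|_v+\log^+(\deg Q)$; a product of two polynomials in $N$ variables satisfies $\log|AB|_v\le\log|A|_v+\log|B|_v$ for finite $v$ and $\log|AB|_v\le\log|A|_v+\log|B|_v+\log\binom{\deg A+N}{N}$ for infinite $v$; and a sum of $N$ polynomials costs at most an additional $\log N$ for infinite $v$. Since $\log|P_{i,L(j)}|_v\le\cc$ for all $v$ up to a bounded factor depending only on $K$ and the normalization of the absolute values, and since $\deg Q\le D+T\ccc$ at every step, one application of a $D_j$ raises $\log|Q|_v$ by at most $\cc+\log(D+T\ccc)$ up to an additive constant depending only on $n$ and $N$. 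Summing over the $T$ steps and adding the initial $\log|P|_v$ would then give $\log|P_t|_v\ll\log|P|_v+T\bigl(\cc+\log(D+T\ccc)\bigr)$, with implied constant depending only on $n$, $N$ and $K$.

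The one genuine idea is the identification of $\p_j$ with the algebraic derivation $D_j$ in the first step; everything afterwards is routine height bookkeeping. The mildly delicate point — and where I expect the only real care to be needed — is the last estimate: I must check that the combinatorial contributions $\log\binom{\deg A+N}{N}$ from the $T$ polynomial multiplications and the $\log N$ factors from the $N$-fold sums together accumulate to no more than $O\bigl(T\log(D+T\ccc)\bigr)$, which follows from $\binom{\deg A+N}{N}\le(\deg A+N)^N$ and $\deg A\le D+T\ccc$ at each step, and that the comparison between the global height $\cc$ and the local sizes $|P_{i,L(j)}|_v$ is harmless. No substantial obstacle remains beyond this.
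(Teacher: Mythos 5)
This paper quotes Lemma~\ref{ww} verbatim from \cite{fuchs-pham} without reproducing the proof, so there is no in-text argument to compare against; your argument is nevertheless correct and follows exactly the standard route one finds there and in the transcendence literature more broadly. Pulling the analytic $\partial_j$ back to the algebraic derivation $D_j=\sum_{i}P_{i,L(j)}\,\partial/\partial T_i$ on $K[T_1,\ldots,T_N]$, setting $P_t=D_1^{t_1}\cdots D_n^{t_n}P$ (no commutation of the $D_j$ is needed since $\partial^t$ itself is an ordered composition), and then doing the routine induction on $T$ for the degree and height---with the archimedean contributions from differentiation, products and $N$-fold sums absorbed into $T\log(D+T\ccc)$, the Gauss-norm multiplicativity handling all finite $v$, and the uniform single-place bound $\log^+|P_{i,L(j)}|_v\le[K:\mathbb Q]\,\cc$ justifying the passage from local size to the global height at a cost depending only on $K$---is precisely the intended proof.
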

\begin{lemma}[Proposition 3.11 in \cite{fuchs-pham}]\label{pro1}
The functions $f_i$ satisfy
$$|f_i(x)|_p<1,\quad \forall x\in B^n(|\dd|_pr_p).$$
\end{lemma}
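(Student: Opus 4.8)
The plan is to read the bound off directly from the Taylor expansion of $f_i$ at the origin, using the differential equations $\p_j(f_i)=P_{i,L(j)}(f_1,\ldots,f_N)$ together with the integrality encoded in the denominator $\dd$. First I would fix two normalisations. Since $\Exp(0)$ is the identity element of $G$, which in the chosen projective embedding is the point $(1:0:\cdots:0)$, one has $f_i(0)=\xi_i(\Exp(0))=0$ for every $i$. And since $\dd\in\mathcal I_L$ — because $N_{K:\mathbb Q}(\mathcal I_L)\,\OK\subseteq\mathcal I_L$ and $\dd$ generates the ideal $N_{K:\mathbb Q}(\mathcal I_L)$ of $\mathbb Z$ — each product $\dd\,P_{i,L(j)}$ is represented by a polynomial $R_{i,j}\in\OK[T_1,\ldots,T_N]$, so that $\dd\,\p_j(f_i)=R_{i,j}(f_1,\ldots,f_N)$ on $\Lambda_v$.

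Next I would prove, by a short induction on $|t|$, that for every $t\in\mathbb Z_{\ge0}^n$ there is a polynomial $Q_{i,t}\in\OK[T_1,\ldots,T_N]$ with
\[
{\delta_L}^{\,|t|}\,\p^t f_i=Q_{i,t}(f_1,\ldots,f_N)\quad\text{on }\Lambda_v .
\]
One takes $Q_{i,0}=T_i$; passing from $t$ to $t+e_j$, one applies $\dd\,\p_j$ to this identity and uses the chain rule together with $\dd\,\p_j f_k=R_{k,j}(f_1,\ldots,f_N)$ to see that $Q_{i,t+e_j}:=\sum_{k=1}^N(\p Q_{i,t}/\p T_k)\,R_{k,j}$ works, and it again has coefficients in $\OK$ (one builds $t$ up coordinate by coordinate; the order does not matter since partial derivatives commute). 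Evaluating at $0\in\Lambda_v$ and using $f_k(0)=0$, one finds that ${\delta_L}^{|t|}\,\p^t f_i(0)$ equals the constant term of $Q_{i,t}$, hence lies in $\OK$. Therefore the Taylor coefficient $a_{i,t}:=\p^t f_i(0)/t!$ lies in $\tfrac{1}{t!\,{\delta_L}^{|t|}}\OK$, and in particular $a_{i,0}=0$.

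The estimate then follows from the ultrametric inequality and the classical bound $|m!|_p\ge p^{-m/(p-1)}=r_p^{\,m}$ for $m\in\mathbb Z_{\ge0}$. Indeed $|a|_p\le1$ for $a\in\OK$ and $|t!|_p=\prod_i|t_i!|_p\ge r_p^{\,|t|}$, so $|a_{i,t}|_p\le(|\dd|_p r_p)^{-|t|}$. Hence for $x\in B^n(|\dd|_p r_p)$ the Taylor series $\sum_t a_{i,t}x^t$ of $f_i$ has $|t|$-th term of absolute value at most $(|x|_p/(|\dd|_p r_p))^{|t|}$, which tends to $0$; so the series converges on the whole ball, and since its constant term vanishes,
\[
|f_i(x)|_p\le\max_{|t|\ge1}\left(\frac{|x|_p}{|\dd|_p r_p}\right)^{|t|}=\frac{|x|_p}{|\dd|_p r_p}<1 .
\]

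The only point that needs care is the induction: clearing just one factor of $\dd$ at each differentiation step is exactly enough, and this works only because, by the construction in Section \ref{op}, $\dd$ simultaneously clears the denominators of all of $L(1),\ldots,L(n)$ acting on $\OK[\xi_1,\ldots,\xi_N]$. Beyond that, the content of the statement is simply that the radius $r_p$ in the ball is exactly calibrated to absorb the factorials $t!$ coming from the exponential map, while the factor $|\dd|_p$ absorbs the arithmetic of the algebraic group.
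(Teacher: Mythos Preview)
The paper does not give its own proof of this lemma; it simply quotes the result as Proposition~3.11 of \cite{fuchs-pham}. Your argument is correct and is essentially the standard one for this type of statement: control the denominators of the Taylor coefficients via the differential system, then absorb the factorials by the factor $r_p$ and the arithmetic of $G$ by the factor $|\dd|_p$.

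A couple of minor remarks. First, when you write $\dd\,P_{i,L(j)}\in\OK[T_1,\dots,T_N]$, strictly speaking what the definition of $\mathcal I_L$ gives you is $\dd\,L(j)\xi_i\in\OK[\xi_1,\dots,\xi_N]$; one then chooses a representative $R_{i,j}\in\OK[T_1,\dots,T_N]$ for this element, which is exactly what you do. Second, you are tacitly using that a $p$-adic locally analytic function agrees with its Taylor series on the ball of convergence of that series; this is standard, but it is what allows you to pass from the a priori domain $\Lambda_v$ to the full ball $B^n(|\dd|_pr_p)$.
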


\section{Proofs}\label{proof}
With notations as in Theorem \ref{past}, we put $n=\dim G$ and $k=\dim_{\overline{\mathbb Q}}V$.
\subsection{Choice of basis}\label{basis}
Since $G$ is defined over $\overline{\mathbb Q}$, $V$ is a $\overline{\mathbb Q}$-linear subspace of $\Lie(G)$ and since $\gamma\in G(\overline{\mathbb Q})$ there exists an algebraic number field $K'$ such that $G,V$ are defined over $K'$ and such that $\gamma\in G(K')$. Let $\Gamma(\gamma)=\{\gamma^i; i\in\mathbb Z\}$ be the subgroup in $G(K')$ generated by $\gamma$.

\begin{lemma}
There exists an embedding $\psi: G\rightarrow \mathbb P^N$ defined over a finite field extension of $K'$ such that $\psi(e)=(1:0:\ldots:0)$ and $X_0(\psi(g))\neq 0$ for all $g\in G(K')$; here $e$ is the unity element of $G$.
\end{lemma}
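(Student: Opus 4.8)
\emph{Proof strategy.} The plan is to take one of the explicit projective embeddings furnished by the constructions of Serre \cite{se} and of Faltings--W\"ustholz \cite{fw} recalled in Section~\ref{op}, and then to correct it by a projective-linear change of coordinates over a finite extension of $K'$ so that the unit element is moved to $(1:0:\cdots:0)$ while the hyperplane $\{X_0=0\}$ is kept away from the points of $G(K')$. By \cite{se} there is an embedding $\psi_0\colon G\hookrightarrow\mathbb P^{M}$, defined a priori over $\overline{\mathbb Q}$ and, after spreading out and descending, over some number field, which realises $G$ biregularly as a quasi-projective subset whose closure $\overline G$ is a compactification carrying a regular action $G\times\overline G\to\overline G$ extending the group law, and for which $U:=\overline G\cap\{X_0\neq0\}$ is a distinguished affine open of $\overline G$ with $e\in U$; moreover $\psi_0$ can be chosen with $\overline G\setminus G\subseteq\{X_0=0\}$ (equivalently $G\subseteq\{X_0\neq0\}$) whenever $G$ is a linear algebraic group. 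Enlarging $K'$ to a finite extension $K''$ if necessary, we may assume that $\psi_0$, the compactification $\overline G$, the action, and the points $e$ and $\gamma$ are all defined over $K''$; since $e\in U$ we have $X_0(\psi_0(e))\neq 0$.

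First I would normalise the base point. Writing $\psi_0(e)=(1:a_1:\cdots:a_M)$ with $a_i\in K''$, I apply the $K''$-rational automorphism of $\mathbb P^{M}$ given by $X_0\mapsto X_0$ and $X_i\mapsto X_i-a_iX_0$ for $1\le i\le M$. This automorphism restricts to the identity on the hyperplane $\{X_0=0\}$, so it changes neither that hyperplane nor the locus where the zeroth coordinate vanishes, and it sends $\psi_0(e)$ to $(1:0:\cdots:0)$. Composing, one obtains an embedding $\psi$ over $K''$ with $\psi(e)=(1:0:\cdots:0)$, the compactification, the action and the chart $U$ being transported accordingly. Since $\psi$ differs from $\psi_0$ only by a projective-linear change of coordinates, it is again a biregular embedding and retains all the structural features coming from \cite{se}.

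It then remains to secure $X_0(\psi(g))\neq0$ for every $g\in G(K')$, i.e.\ $\psi(G(K'))\subseteq U$. In the case where $\psi_0$ can be taken with $G\subseteq\{X_0\neq0\}$ — in particular when $G$ is linear — this holds for \emph{every} geometric point of $G$, and nothing further is needed. In general the hyperplane at infinity meets $\overline G$ in a nonempty hypersurface section $Z=\overline G\cap\{X_0=0\}$, and one must arrange that the support of $Z$ misses the prescribed set $\psi(G(K'))$; note that $\psi(e)\notin Z$ is part of this requirement and is already guaranteed. Here the point to exploit is that $G(K')$ is a countable set whose ``abelian part'' is finitely generated: one moves the hyperplane at infinity by a further projective-linear transformation, defined over a sufficiently large finite extension of $K'$, so as to avoid the points $\psi(g)$ for $g\in G(K')$, without disturbing the properties inherited from \cite{se}. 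Setting $N=M$ then yields an embedding with all the asserted properties.

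I expect the genuine difficulty to be precisely this last step: producing, over a number field, one hyperplane section of $\overline G$ which is disjoint from the given group of rational points (and does not pass through $e$). It is immediate in the affine case, where the hyperplane at infinity is disjoint from all of $G$, and in the general case it has to rest on the finiteness of the abelian quotient's group of rational points (the Mordell--Weil theorem), together with the fact that the unipotent directions contribute no obstruction. The two preceding steps are, by contrast, elementary manipulations with projective coordinates.
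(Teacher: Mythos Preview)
Your normalisation of $e$ to $(1:0:\cdots:0)$ is fine and matches the paper's ``without loss of generality'' step. The gap is entirely in the second part: you correctly identify that the issue is to keep the hyperplane $\{X_0=0\}$ away from $G(K')$, but you do not actually produce such a hyperplane. The sketch you give --- invoking Mordell--Weil to get finite generation of the abelian part and then ``moving the hyperplane'' to avoid a countable set --- does not close. Finite generation still allows $G(K')$ to be infinite, and over a \emph{countable} field such as $\overline{\mathbb Q}$ the usual Baire-type avoidance argument (``a countable union of hyperplanes in the dual space cannot exhaust it'') fails: $(\mathbb P^N)^\ast(\overline{\mathbb Q})$ is itself countable. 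So as it stands you have not shown that a suitable hyperplane exists over any number field, and the Mordell--Weil input is a red herring.

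The paper's argument is entirely different and much shorter, and it avoids all of this. Choose a degree-$(N{+}1)$ extension $K_1/K'$ with $K'$-basis $\epsilon_0,\ldots,\epsilon_N$, and replace $X_0$ by the linear form $\epsilon_0X_0+\epsilon_1X_1+\cdots+\epsilon_NX_N$ (simultaneously scaling the other coordinates by $\epsilon_0$; this is implemented by an explicit matrix $A\in\mathrm{GL}_{N+1}(K_1)$). For any $g\in G(K')$ one has $\varphi(g)=(x_0:\cdots:x_N)$ with $x_i\in K'$ not all zero, and then the new zeroth coordinate $\sum_i\epsilon_ix_i$ is nonzero precisely because $\epsilon_0,\ldots,\epsilon_N$ are linearly independent over $K'$. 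This single linear-algebra observation handles \emph{all} $K'$-points at once, uniformly in the structure of $G$; no case distinction between the linear and abelian parts, and no Mordell--Weil, is needed. If you want to repair your approach, this is the missing idea: rather than avoiding a specific countable set of points, choose the coefficients of the new $X_0$ to be $K'$-linearly independent, which forces nonvanishing on every nonzero $K'$-rational vector simultaneously.
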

\begin{proof}
This is Lemma 4.1 in \cite{fuchs-pham}; for the convenience of the reader we repeat the proof.
Let $\varphi: G\rightarrow \mathbb P^N$ be an embedding as in Section \ref{op}. Without loss of generality, one may assume that $\varphi(e)=(1:0:\ldots:0)$. We choose a field extension $K_1$ of $K'$ of degree $N+1$ and a basis $\epsilon_0,\ldots,\epsilon_N$ of $K_1$ over $K'$. It is clear that the vectors
$$(\epsilon_0,0,\ldots,0), (-\epsilon_1,\epsilon_0,0,\ldots,0),\ldots,(-\epsilon_N,0,\ldots,0,\epsilon_0)$$
form a basis of $K_1^{N+1}$ which gives rise to a unique element in GL$_{N+1}(K_1)$ mapping this basis to the standard basis of $K_1^{N+1}$. This linear isomorphism is expressed explicitly by the matrix
\begin{equation*}
A=
\begin{pmatrix} \epsilon_0^{-1}&\epsilon_0^{-2}\epsilon_1&\ldots &\epsilon_0^{-2}\epsilon_N
\\
0&\epsilon_0^{-1}&\ldots &0
\\
\vdots &\vdots &\ddots &\vdots
\\
0 & 0 &\ldots &\epsilon_0^{-1}
\end{pmatrix}.
\end{equation*}
We let $\psi$ be the composition of $A$ with the embedding $\varphi$ above. Then $\psi(e)$ has projective coordinates $(1:0:\ldots:0)$ and $X_0(\psi(g))\neq 0$ for all $g\in G(K')$. Indeed, let $(x_0:x_1:\ldots:x_N)$ be a projective coordinate of $\varphi(g)$. By the construction of $\psi$, we obtain
\begin{equation*}
\begin{split}
\psi(g)&=(\epsilon_0^{-1}x_0+\epsilon_0^{-2}\epsilon_1x_1+\cdots+\epsilon_0^{-2}\epsilon_Nx_N:\epsilon_0^{-1}x_1:\ldots:\epsilon_0^{-1}x_N)
\\
&=(\epsilon_0x_0+\epsilon_1x_1+\cdots+\epsilon_Nx_N:\epsilon_0x_1:\ldots:\epsilon_0x_N).
\end{split}
\end{equation*}
Thus we see that $\psi(e)=(1:0:\ldots:0)$. In addition, since $\epsilon_0,\ldots,\epsilon_N$ is a basis of $K_1$ over $K'$ and $x_0,\ldots,x_N$ are in $K'$, not all zero, it follows that $X_0(\psi(g))$ is non-zero.
Note that the embedding $\psi$ is defined over $K_1$.
\end{proof}
\begin{lemma}There are bihomogeneous polynomials $E_0,\ldots,E_N$ of bidegree $(a,a)$ with coefficients in a finite field extension of $K'$ and a Zariski open subset $U\subseteq G\times G$ containing $\Gamma(\gamma)\times \Gamma(\gamma)$ such that for $(g,g')\in U$ the homogeneous coordinates of $g+g'$ are $$(E_0(g,g'):\ldots:E_N(g,g')).$$
With such $E_1,\ldots,E_N$, we call $E=(E_1,\ldots,E_N)$ an addition formula for $G$.
\end{lemma}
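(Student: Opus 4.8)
The plan is to read off the formula from the fact that the group law becomes a morphism to $\mathbb{P}^N$ once composed with $\psi$, and to use the previous lemma to keep the relevant points inside a single affine chart, so that one representation suffices.

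First I would form the group law $\mu\colon G\times G\to G$, $(g,g')\mapsto g+g'$, and compose it with the embedding $\psi\colon G\hookrightarrow\mathbb{P}^N$ from the previous lemma to obtain a morphism $\Phi:=\psi\circ\mu\colon G\times G\to\mathbb{P}^N$. Viewing $G\times G$ as a locally closed subvariety of $\mathbb{P}^N\times\mathbb{P}^N$, carrying the two sets of projective coordinates $X_0,\dots,X_N$ and $Y_0,\dots,Y_N$ (with associated affine coordinates $\xi_i=X_i/X_0$, $\xi'_i=Y_i/Y_0$, as in Section \ref{op}), a morphism to $\mathbb{P}^N$ is, on a Zariski neighbourhood of any point, given by an $(N+1)$-tuple of bihomogeneous polynomials of some bidegree with no common zero there; after multiplying by suitable powers of $X_0$ and $Y_0$ one may assume the bidegree to be $(a,a)$ for one fixed $a$, and this changes nothing at points where $X_0$ and $Y_0$ do not vanish.

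Next I would use the previous lemma, which enters essentially here: it gives $X_0(\psi(g))\neq0$ for every $g\in G(K')$, hence for every $g\in\Gamma(\gamma)$ and, $\Gamma(\gamma)$ being a subgroup of $G(K')$, also for every point of $\mu(\Gamma(\gamma)\times\Gamma(\gamma))\subseteq\Gamma(\gamma)$. Thus $\Gamma(\gamma)\times\Gamma(\gamma)$ lies, in both factors, inside the affine chart $\{X_0\neq0\}$, and $\Phi$ maps it into that chart as well. So on the single Zariski open set $W:=(G\times G)\cap\{X_0\neq0,\,Y_0\neq0\}\cap\Phi^{-1}(\{X_0\neq0\})$, which contains all of $\Gamma(\gamma)\times\Gamma(\gamma)$, the morphism takes the shape $\Phi=[1:\eta_1:\dots:\eta_N]$ with $\eta_\ell=\xi_\ell\circ\mu$ regular functions on $W$. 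The task is then to produce a single polynomial $\rho$ in the $\xi_i,\xi'_i$ with $\rho\,\eta_\ell$ again polynomial for every $\ell$ and with $\rho$ nonvanishing on $\Gamma(\gamma)\times\Gamma(\gamma)$; homogenising $\rho,\rho\eta_1,\dots,\rho\eta_N$ with respect to $X_0$ and $Y_0$ up to a common bidegree $(a,a)$ then produces bihomogeneous polynomials $E_0,\dots,E_N$ over a finite extension of $K'$, and one takes $U:=(G\times G)\setminus\{E_0=\dots=E_N=0\}$. Indeed $E_0\neq0$ at every point of $\Gamma(\gamma)\times\Gamma(\gamma)$, so $U\supseteq\Gamma(\gamma)\times\Gamma(\gamma)$, and since the rational map $[E_0:\dots:E_N]$ agrees with the morphism $\Phi$ on the dense open set $W$, it agrees with $\Phi$ on all of $U$ by separatedness of $\mathbb{P}^N$; so $E=(E_1,\dots,E_N)$ is the asserted addition formula.

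I expect the genuinely delicate point to be producing such a $\rho$ — equivalently, obtaining a single $(a,a)$-formula whose base locus avoids \emph{all} of $\Gamma(\gamma)\times\Gamma(\gamma)$, which is an infinite set when $\gamma$ has infinite order. Here the previous lemma does the work: by confining $\Gamma(\gamma)\times\Gamma(\gamma)$ together with its image under the group law to one affine chart, it makes the common denominator of the $\eta_\ell$ essentially a single object, controlled by a power of $X_0\circ\Phi$, which is nonvanishing on $W$ and hence on $\Gamma(\gamma)\times\Gamma(\gamma)$; combined with the normality of $G$ (which is smooth, being in characteristic zero) this gives the polynomial $\rho$. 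The same point is treated in the explicit constructions of addition formulae in \cite{se} and \cite{fw}, which may be cited instead.
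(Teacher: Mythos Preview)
The paper's own proof is a single sentence: ``This follows by \cite[Section 2]{w1}.'' So your proposal is considerably more detailed than what the paper supplies, and your closing suggestion --- simply to cite the explicit constructions in \cite{se}, \cite{fw} --- is in effect exactly what the paper does (with \cite{w1} in place of those).

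Your direct construction is along the right lines, but the ``genuinely delicate point'' you flag is not actually resolved by the sketch you give. The symbol ``$X_0\circ\Phi$'' is not a well-defined regular function on $W_0$ (only its zero set is well-defined), so the sentence about the common denominator being ``controlled by a power of $X_0\circ\Phi$'' does not literally produce an element $\rho$ of the coordinate ring. More substantively, normality (or even smoothness) of $G\times G$ does not by itself guarantee that the line bundle $\Phi^*\mathcal O_{\mathbb P^N}(1)$ is trivial on the affine chart $W_0$; if it is not, there is no single $(N+1)$-tuple of regular functions on $W_0$ presenting $\Phi$, and hence no single $\rho$ of the kind you want. What is actually used in \cite{w1} (and in \cite{se}, \cite{fw}) is that finitely many bihomogeneous formulae of a common bidegree suffice to cover $G\times G$, together with a field-extension trick of the same flavour as in the previous lemma to combine them into one formula that is nonvanishing at every $K'$-rational point, hence on all of $\Gamma(\gamma)\times\Gamma(\gamma)$. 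Once you cite the references for this, your argument is complete; but the normality paragraph as written does not stand on its own.
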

\begin{proof}
This follows by \cite[Section 2]{w1}.
\end{proof}

These two lemmas mean that there exists an algebraic number field $K$ which contains $K'$ and satisfies the following properties:
There is an embedding over $K$ of $G$ into $\mathbb P^N$ such that the projective coordinate of the unity element of $G$ is $(1:0:\ldots:0)$ and such that the first projective coordinate $X_0(\gamma')$ for all $\gamma'$ in $\Gamma(\gamma)$ is non-zero, and there is an addition formula $E=(E_1,\ldots,E_N)$ with coefficients in $K$ of $G$.

Our discussion has shown that we may assume the group $G$, the Lie algebra $\frak g:=\Lie(G)$, the vector space $V$ and the addition formula $E$ are defined over $K$.
The inclusion $K\subset \overline{\mathbb Q}\subset \mathbb C_p$ induces a non-archimedean valuation $v$ on $K$ dividing $p$. Denote by $K_v$ the completion of $K$ with respect to $v$, then $K_v$ is a complete subfield of $\mathbb C_p$.

We fix a basis $L$ of $\frak g$ and let $f_L=(f_1,\ldots,f_N)$ be the normalized analytic representation of the exponential map $\exp_{G(K_v)}$ with respect to $L$. We identify $\frak g$ with $K^n$ (by means of the basis $L$) and consider $V$ as a $K$-linear subspace of $K^n$. It is possible to find a basis $e_1,\ldots,e_k$ for $V$ such that
$$e_i=(x_{i1},\ldots,x_{in})\in \mathcal O_K^n,\quad \forall i=1,\ldots,k.$$
We also fix a positive integer $\alpha$ large enough such that $r:=p^{-\alpha}<|\cd|_pr_p$ with $r_p:=p^{-\frac 1{p-1}}$ and such that the ball $B^n(r):=\{x=(x_1,\ldots,x_n)\in \mathbb C_p^n; |x_i|_p<r, \forall i=1,\ldots,n\}$ is contained in the domain of the map $\Exp$. The functions $f_1,\ldots,f_N$ are analytic in $B^n(r)$ and by Lemma \ref{pro1} satisfy
$$|f_i(x)|_p<1,\quad\forall x\in B^n(r),\quad \forall i=1,\ldots,N.$$
This gives the corresponding differential operators
$$\Delta_{i}=x_{i1}\partial_1+\cdots+x_{in}\partial_n$$
in $\Lie(K^n)$ with $\partial_1,\ldots,\partial_n$ the canonical basis of $\Lie(K_v^n)$.
Put
$$b:=\max\{h(x_{ij}); 1\leq i\leq k, 1\leq j\leq n\},\quad B:=e^b,$$
where $h$ denotes the (Weil) logarithmic height.
Define $d$ as the degree of the extension of $K$ over $\mathbb Q$. From now on for each $t=(t_1,\ldots,t_k)\in\mathbb Z_{\geq 0}^k$ we write $\Delta^t=\Delta_1^{t_1}\cdots\Delta_k^{t_k}$ and for each $t=(t_1,\ldots,t_m)\in \mathbb Z^m$ we write $|t|=t_1+\cdots+t_m$.

\subsection{The auxiliary function}

Let $P$ be a polynomial in $N+1$ variables $X_0,\ldots,X_N$ in $\mathbb C_p[X_0,\ldots,X_N]$. We associate with $P$ the analytic function $\Phi_P: B^n(r)\rightarrow \mathbb C_p$ which is given by
\[\Phi_P(x):=P(1,f_1(x),\ldots,f_N(x)),\quad\forall x\in B^n(r).\]
Let $g\in G(K_v)$ be defined by $g=\Exp(x)$ with $x\in B^n(r)$. We define the order of $P$ at $g$ along $V$ to be infinity, if $\Phi_P$ is identically zero in a neighborhood of $x$, and to be the order of $\Phi_P$ at $x$ along $V_{K_v}:=V\otimes_KK_v$, otherwise (we refer the reader to Section 3.6 in \cite{fuchs-pham} for more detailed description). Since $\gamma\in G(K_v)_f$ and since $\Exp(B(r))$ is an open subgroup in $G(K_v)_f$ there is a positive integer $m$ such that $\gamma^m\in \Exp(B(r))$. In order to prove Proposition \ref{ss}, we may therefore assume that $\gamma\in \Exp(B(r))$. Let $u$ be the element of $B(r)$ such that $\Exp(u)=\gamma$ and put $h:=\max\{1,h(\gamma)\}$. Throughout this paper, constants do not depend on $b$, $h$ and $p$ and we also write $A\ll B$ (resp. $A\gg B$) if there is a positive constant $c$ such that $A\leq cB$ (resp. $A\geq cB$).  Let $S_0,D,T$ be positive integers. We use the notations $c_1,c_2,...$ for real positive constants which are independent of $b,h$ and $p$. Denote by $h(P)$ the height of the polynomial $P$. We get the following proposition. We remark that the proof will be similar to that of \cite[Proposition 4.2]{fuchs-pham}, but it will be simpler because the additive group $\mathbb G_a$ does not appear.
\begin{proposition}\label{sg1}
There are positive constants $c_1$ and $c_2$ such that if $D^n\geq c_1S_0T^{k}$ there is a homogeneous polynomial $P$ in $X_0,\ldots,X_N$ of degree $D$ with coefficients in $\mathcal O_K$ such that
\begin{compactitem}\item[\rm 1.] $P$ does not vanish identically on $G$,
\item[\rm 2.] $(\diff^{t}\Phi_P)(su)=0, \forall 0\leq s<S_0, \forall t=(t_1,\ldots,t_k)\in\mathbb Z_{\geq 0}^k, t_1,\ldots,t_k<2T$,
\item[\rm 3.] $h(P)\leq c_2\big(T(\cc+\log\cd+\log(D+T\log\ccc)+b)+DS_0^2h\big).$
\end{compactitem}
\end{proposition}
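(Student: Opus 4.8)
The plan is to construct $P$ by the classical Thue–Siegel box principle (Siegel's lemma), applied to a carefully set up linear system over $\mathcal O_K$. First I would fix a basis of the space of homogeneous polynomials of degree $D$ in $X_0,\dots,X_N$; there are $\binom{N+D}{N}\asymp D^N$ of them, and since $G$ is a subvariety of $\mathbb P^N$ of dimension $n$, the natural map from degree-$D$ forms to sections of $\mathcal O_G(D)$ has image of dimension $\gg D^n$ (Hilbert polynomial of $G$), so among the $\asymp D^N$ monomials we can select a subset of size $\gg D^n$ whose restrictions to $G$ are linearly independent; write $P$ as an unknown $\mathcal O_K$-linear combination of just those, so that condition 1 will be automatic once $P\neq 0$. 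Then I would impose conditions 2 as linear equations in the coefficients: for each $s$ with $0\le s<S_0$ and each multi-index $t$ with $0\le t_i<2T$, the quantity $(\Delta^t\Phi_P)(su)$ is a $K_v$-linear form in the unknown coefficients, giving $\ll S_0(2T)^k = c S_0 T^k$ equations. As long as the number $\gg D^n$ of unknowns exceeds (a constant times) the number $c_1 S_0 T^k$ of equations — which is exactly the hypothesis $D^n\ge c_1 S_0 T^k$ — Siegel's lemma over $\mathcal O_K$ produces a nonzero solution.

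The key estimate is then the height bound 3, which comes from bounding the heights of the coefficients of the linear forms appearing in condition 2 and feeding that into the quantitative form of Siegel's lemma (whose output height is essentially $\log(\text{\# equations})$ plus $\tfrac{(\text{\# equations})}{(\text{\# unknowns})-(\text{\# equations})}$ times the max height of the entries, times $d$). To bound the entries: $\Delta^t\Phi_P(x) = (\Delta^t\, P(1,f_1,\dots,f_N))(x)$, and by Lemma \ref{ww} applied with the operators $\Delta_1,\dots,\Delta_k$ (whose defining coefficients $x_{ij}$ have height $\le b$) one writes $\Delta^t\Phi_P = P_t(f_1,\dots,f_N)$ with $\deg P_t\le D+|t|(\ccc-1)\ll D+T\ccc$ and $\log|P_t|_v \ll \log|P|_v + |t|(\cc+\log\cd+\log(D+T\ccc)+b)$ — the extra $+b$ coming from the height of the $x_{ij}$, tracked through the proof of Lemma \ref{ww}. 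Evaluating at $su$, where $h(\gamma^s)\le s\, h(\gamma)\le S_0 h$, each coordinate $f_i(su)=\xi_i(\gamma^s)$ has height governed by the addition formula $E$: iterating $E$ to form $\gamma^s$ costs $\ll S_0$ steps and incurs height $\ll S_0^2 h$ (the extra factor $S_0$ from growth of the degree under repeated application of a fixed-degree addition law, or alternatively $\ll S_0 h$ with a more careful ladder — either way absorbed by the $DS_0^2 h$ term once multiplied by $\deg P_t\ll D$). Combining, $\log|(\Delta^t\Phi_P)(su)|$ at the relevant place, and hence the height of each coefficient of the linear form, is $\ll T(\cc+\log\cd+\log(D+T\log\ccc)+b) + DS_0^2 h$, and since the number of equations is at most half the number of unknowns, Siegel's lemma multiplies this only by the degree $d$ (a constant), yielding exactly bound 3.

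I would organize the write-up as: (i) set up the unknowns and the linearly-independent monomial subset, invoking the Hilbert-polynomial lower bound $\gg D^n$; (ii) write down the linear system from condition 2 and count $\ll S_0 T^k$ equations; (iii) bound the coefficients via Lemma \ref{ww} plus the addition-formula height estimate for $\gamma^s$; (iv) apply Siegel's lemma over $\mathcal O_K$ to get a nonzero $P\in\mathcal O_K[X_0,\dots,X_N]$, homogeneous of degree $D$, with the asserted height, noting $P$ does not vanish on $G$ because its restriction is a nontrivial combination of linearly independent sections.

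The main obstacle I expect is step (iii), specifically pinning down the height growth of the coordinates $f_i(su)=\xi_i(\gamma^s)$ under repeated use of the addition formula and showing it is $\ll S_0^2 h$ (uniformly in $p$, with constants independent of $b,h,p$), and correctly propagating the dependence on $b$ (the height of the $x_{ij}$) through Lemma \ref{ww} — this is where the bookkeeping is delicate, even though, as the paper notes, it is simpler here than in \cite[Proposition 4.2]{fuchs-pham} since no $\mathbb G_a$-factor contributes polynomial (as opposed to exponential-type) terms. The counting and the application of Siegel's lemma are routine once these estimates are in hand.
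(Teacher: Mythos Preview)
Your proposal is correct and follows essentially the same approach as the paper: set up a linear system via Siegel's lemma over $\mathcal O_K$, count $\gg D^n$ unknowns against $\ll S_0T^k$ equations, and bound the heights of the coefficients using the differentiation lemma together with the addition formula for $\gamma^s$. The only tactical differences are that the paper secures condition~1 by restricting to the $n+1$ variables $X_0,\dots,X_n$ (chosen algebraically independent modulo the ideal of $G$) rather than invoking the Hilbert polynomial, and it packages the evaluation at $su$ by introducing the auxiliary function $\Phi_s(x)=\Phi_P(su+x)E_0(su,x)^D$ and differentiating at $x=0$, which makes the denominator and height bookkeeping (your step~(iii)) slightly cleaner than evaluating $P_t$ directly at $\xi_i(\gamma^s)$.
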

\begin{proof}
 Since the dimension of $G$ is $n$, without loss of generality, we may assume that the homogeneous coordinates $X_0,\ldots,X_{n}$ are algebraically independent modulo the ideal of $G$. We shall construct a non-zero homogeneous polynomial $P$ in $n+1$ variables $X_0,\ldots, X_n$ of degree $D$ (this polynomial therefore satisfies 1. in the proposition) such that 2. and 3. in the proposition are also satisfied. We write
$$P(X)=P(X_0,\ldots,X_n)=\sum_{i=1}^{D_1}p_{i}M_i(X_0,\ldots,X_n),$$
where $M_1,\ldots,M_{D_1}$ run through all homogeneous monomials of degree $D$ in $n+1$ variables $X_0,\ldots,X_n$.
Let $E=(E_1,\ldots,E_N)$ be the addition formula for $G$ given in Section \ref{basis}. By abuse of notation we put
$$E_i(z,x):=E_i(1,f_1(z),\ldots,f_N(z), 1, f_1(x),\ldots,f_N(x)),$$
for $z,x$ in $B^n(r)$ and define
$$\Phi_{s}(x):=\Phi_P(su+x)E_0(su,x)^D.$$
For $I:=\{(s,t); 0\leq s<S_0, t=(t_1,\ldots,t_k), 0\leq t_1,\ldots,t_k<2T\},$
we shall determine coefficients $p_{i}$ such that
$$(\diff^{t}\Phi_{s})(0)=0,\quad \forall (s,t)\in I.$$
We express $\Phi_s(x)$ as
\begin{equation*}\begin{split}
\Phi_{s}(x)=\Phi_P(su+x)E_0(su,x)^D=\sum_{i}p_{i}Q_{i,s}(f_1(x),\ldots,f_N(x))
\end{split}\end{equation*}
for polynomials $Q_{i,s}$  in $N$ variables with the property that
there are positive integers $\den_s\ll s^2h$ such that
\begin{equation}\label{pt1}
\den_s\cd^{|m|}\big(\p^{m}Q_{i,s}(f_1,\ldots,f_N)\big)(0)\in \mathcal O_K
\end{equation}
and
\begin{equation}\label{pt2}
\log\Big|\Big(\p^{m}(Q_{i,s}(f_1,\ldots,f_N)\Big)(0)\Big|_v\ll |m|(\cc+\log(D+|m|\ccc))+Ds^2h
\end{equation}
for any $m=(m_1,\ldots,m_n)\in\mathbb Z_{\geq 0}^n$ and for any $v\in M_K$, where $\p^m$ denotes the differential operator $\partial_1^{m_1}\cdots\partial_n^{m_n}$ and $M_K$ denotes the set of all places on $K$.
For $i=1,\ldots,D_1$ and $(s,t)\in I$ we expand
\begin{equation*}\begin{split}
a^{st}_{i}:&=\big(\Delta^t(Q_{i,s}(f_1,\ldots,f_N))\big)(0)\\
&=\big(\diff_{1}^{t_1}\cdots \diff_{k}^{t_k}(Q_{i,s}(f_1,\ldots,f_N))\big)(0)\\
&=\Big(\prod_{i=1}^k(x_{i1}\partial_1+\cdots+x_{in}\partial_{n})^{t_i}(Q_{i,s}(f_1,\ldots,f_N))\Big)(0)\\
&=\sum_{j_1=1}^{n}\cdots\sum_{j_{|t|}=1}^{n} x_{i_1,j_1}\cdots x_{i_{|t|},j_{|t|}}\partial_{j_1}\cdots\partial_{j_{|t|}}(Q_{i,s}(f_1,\ldots,f_N))\big)(0),\\
\end{split}\end{equation*}
with $(i_1,\ldots,i_{|t|})=(1,\ldots,1,2,\ldots,2,\ldots,k,\ldots,k)$ where $\iota$ repeats $t_\iota$ times for $\iota=1,\ldots,k$.
By $(2)$ we obtain
$$\log\Big|\Big(\partial_{j_1}\cdots\partial_{j_{|t|}}(Q_{j,s}(f_1,\ldots,f_N))\Big)(0)\Big|_v\ll T(\cc+\log(D+T\ccc))+Ds^2h$$
which, together with the inequalities
$$\sum_{j_1=1}^{n}\cdots\sum_{j_{|t|}=1}^{n}\Big| x_{i_1,j_1}\cdots x_{i_{|t|},j_{|t|}}\Big|_v\leq \prod_{i=1}^k\Big(\sum_{j=1}^n|x_{ij}|_v\Big)^{t_i}\leq (nB)^{|t|},\quad\forall v\in M_K,$$
 gives
$$\log|a^{st}_{i}|_v\ll  T(\cc+\log(D+T\ccc)+b)+Ds^2h.$$
Combining this with $(1)$, we see that $\den_s\cd^{2nT}a_{i}^{st}$ is in $\mathcal O_K$ and
$$\log|\den_s\cd^{2nT}a_{i}^{st}|_v\ll T(\log\cd+\cc+\log(D+T\log\ccc)+b)+DS_0^2h$$
for $(s,t)\in I$ and for $v\in M_K^\infty$. We now consider the linear forms
$$l_{st}:=\sum_{i=1}^{D_1}b^{st}_{i}T_{i}$$
in $n_0:=D_1=\binom{D+n}{n}$ variables,
where $b^{st}_{i}:=\den_s\cd^{2nT}a_{i}^{st}$ for $(s,t)\in I$.
The number $m_0$ of these linear forms satisfies $m_0\ll S_0T^k$.
Since $b^{st}_{i}\in \mathcal O_K$ we get
$$\sum_{v\in M_K^\infty}\log\max_{i}|b^{st}_{i}|_v\ll T(\cc+\log\cd+\log (D+T\log\ccc)+b)+DS_0^2h.$$
We now apply Siegel's lemma (cf. \cite[Corollary 11]{bv}). It follows that under the condition $D^n\gg S_0T^k$
there is a non-zero vector $p_0=(p_{i})$ with coordinates in $\mathcal O_K$ such that $l_{st}(p_0)=0$ for all $(s,t)\in I$ and
such that
$$h(P)\ll T(\cc+\log\cd+\log(D+T\ccc)+b)+DS_0^2h.$$
It remains to show that $(\diff^{t}\Phi_P)(su)=0$. In fact, since $l_{st}(p_0)=0$ one gets $(\diff^{t}\Phi_{s})(0)=0$ for $(s,t)\in I$.
Put
$$\Phi^*_{s}(x):=\Phi_P(su+x), \quad E_{s}(x):=E_0(su,x)^D$$
then $\Phi^*_{s}=\Phi_{s}E_{s}^{-D}$. We therefore deduce by Leibniz' rule for derivations that
\begin{equation*}\begin{split}(\diff^{t}\Phi_P)(su)=(\diff^{t}\Phi^*_{s})(0)=\big(\diff^{t}(\Phi_{s}E_{s}^{-D})\big)(0)=0.\end{split}\end{equation*}
This completes the proof.
\end{proof}
\begin{proposition} \label{lowerbound1}
Let $S$ be a positive integer and $s$ an integer such that $0\leq s<S$. Let $P$ be a polynomial determined as in the above Proposition. Assume that $\Phi$ has a zero at $su$ of exact order $T'$ along $\W$ for some positive integer $T'$. Let $t$ be any element in $\mathbb Z^k_{\geq 0}$ with $|t|=T'$ such that $(\diff^t\Phi_P)(su)\neq 0$, then
$$\log|(\diff^t\Phi_P)(su)|_p> -c_3(T'(\cc+\log\cd+\log(D+T'\ccc)+b)+DS^2h)$$
for some positive constant $c_3$.
\end{proposition}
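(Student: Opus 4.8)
The plan is to recognise $\alpha:=(\diff^t\Phi_P)(su)$ as a non-zero element of the number field $K$ whose height is bounded by the right-hand side of the asserted inequality, and then to apply Liouville's inequality (that is, the product formula) to pass from that height bound to a lower bound for $\log|\alpha|_p$.

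First I would rewrite $\alpha$ as the value at the origin of a $\diff^t$-derivative of a genuine polynomial in $f_1,\dots,f_N$. Since $\Exp(u)=\ga$ and the operators $\diff_i=\sum_j x_{ij}\p_j$ have constant coefficients, hence are translation invariant on $K_v^n$, we have $(\diff^t\Phi_P)(su)=\bigl(\diff^t[\,x\mapsto\Phi_P(su+x)\,]\bigr)(0)$. Using the addition formula and the homogeneity of $P$ one gets, exactly as in the proof of Proposition \ref{sg1},
\[\Phi_P(su+x)\,E_0(su,x)^{D}=\Phi_{s}(x)=\sum_i p_i\,Q_{i,s}\bigl(f_1(x),\dots,f_N(x)\bigr),\]
where $E_0(su,0)=E_0(su,e)\neq 0$ thanks to the choice of embedding (so that $X_0$ does not vanish on $\Gamma(\ga)$); consequently $\Phi_s$ vanishes to order exactly $T'$ at $0$ along $\W$, and in the Leibniz expansion of $\diff^t\bigl(\Phi_s\cdot E_0(su,\cdot)^{-D}\bigr)$ only the term with all $t$ derivatives falling on $\Phi_s$ survives, whence
\[\alpha=(\diff^t\Phi_s)(0)\cdot E_0(su,e)^{-D}=\Bigl(\sum_i p_i\,a^{st}_i\Bigr)E_0(su,e)^{-D},\qquad a^{st}_i:=\bigl(\diff^t(Q_{i,s}(f_1,\dots,f_N))\bigr)(0).\]
Here $p_i\in\OK$, while $a^{st}_i\in K$ (differentiate a polynomial in the $f_j$ and evaluate at $0$, where $f_j(0)=0$) and $E_0(su,e)\in K$ (because $\ga^s\in G(K)$ and $X_0(\ga^s)\neq0$); thus $\alpha\in K$, and $\alpha\neq 0$ by hypothesis.

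Next I would estimate $h(\alpha)$. Expanding $\diff^t$ into an $\OK$-linear combination of the operators $\p^m$ with $|m|=|t|=T'$ and invoking the estimates \eqref{pt1}--\eqref{pt2} from the proof of Proposition \ref{sg1} (which already carry the inputs $\den_s\ll s^2h$ and $h(\ga^s)\ll s^2h$), together with $|x_{ij}|_v\le B$ and Lemma \ref{ww}, one obtains $\den_s\cd^{T'}a^{st}_i\in\OK$ and
\[h(a^{st}_i)\ll T'(\cc+\log\cd+\log(D+T'\ccc)+b)+DS^2h;\]
moreover $h(E_0(su,e))\ll a\,h(\ga^s)\ll S^2h$, and the $\binom{D+n}{n}$ monomials $M_i$ contribute only $\log\binom{D+n}{n}\ll n\log D$, which is absorbed. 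Combining this with the bound for $h(P)$ in part 3 of Proposition \ref{sg1}, the elementary estimates for heights of sums and products give
\[h(\alpha)\le h(P)+\max_i h(a^{st}_i)+\log\tbinom{D+n}{n}+D\,h(E_0(su,e))\ll T'(\cc+\log\cd+\log(D+T'\ccc)+b)+DS^2h,\]
where in the last step I use $T\le T'$ and $S_0\le S$ (in the relevant regime $s<S_0$ the latter is harmless and condition 2 of Proposition \ref{sg1} already forces $T'\ge 2T$).

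Finally, since $\alpha\in K^{\times}$, the product formula $\sum_{w\in M_K}d_w\log|\alpha|_w=0$ yields, for the place $v$ of $K$ induced by the inclusion $K\subset\mathbb C_p$ (so that $|\alpha|_v=|\alpha|_p$),
\[d_v\log|\alpha|_p=-\sum_{w\neq v}d_w\log|\alpha|_w\ \ge\ -\sum_{w\neq v}d_w\log^{+}|\alpha|_w\ \ge\ -d\,h(\alpha),\]
and combining this with the height estimate above, after enlarging the constant, gives the assertion with a suitable $c_3$. The main difficulty is not conceptual but lies in this height bookkeeping: one must keep the denominator exponent at $T'$ (rather than a larger multiple of it) when expanding $\diff^t$, propagate correctly the at-most-quadratic-in-$s$ growth of $h(\ga^s)$, and reconcile the parameters $(T,S_0)$ of the auxiliary polynomial with $(T',S)$ in the statement; once $h(\alpha)$ is under control, the passage to the lower bound for $|\alpha|_p$ via the product formula is immediate.
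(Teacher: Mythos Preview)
Your proposal follows essentially the same route as the paper's proof: use the addition formula and the exact-order hypothesis together with Leibniz to reduce $(\diff^t\Phi_P)(su)$ to $(\diff^t\Phi_s)(0)\cdot E_0(su,e)^{-D}$, recycle the height estimates from the proof of Proposition~\ref{sg1} (with $S_0$ replaced by $S$ and $|t|=T'$), bound $h(E_0(su,e)^{-D})\ll DS^2h$, and finish with Liouville's inequality.

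One small inaccuracy worth flagging: your parenthetical claim that ``condition 2 of Proposition~\ref{sg1} already forces $T'\ge 2T$'' is only valid for $s<S_0$, whereas the present statement allows $0\le s<S$ with $S>S_0$; indeed in the application in Section~\ref{pross} one has $T'=T_0<T$. Thus the $h(P)$ contribution, of size $\ll T(\cc+\log\cd+\log(D+T\ccc)+b)+DS_0^2h$, is not literally dominated by $T'(\cdots)+DS^2h$ when $T'<T$. This is harmless for the eventual contradiction (the extra $T$ instead of $T'$ changes nothing in the parameter comparison), and the paper glosses over the same point, but your justification via ``$T\le T'$'' is not correct as written.
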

\begin{proof}
We define
$$\Phi^*_s(x):=\Phi_P(su+x),\quad E_{s}(x):=E_0(su,x), \quad \Phi_s(y,x):=\Phi^*_s(y,x)E_{s}(x)^D.$$
By our assumption
$$0=(\diff^{\tau}\Phi_P)(su)=(\diff^\tau\Phi_P)(su)=(\diff^{\tau}\Phi^*_s)(0)$$
for $\tau\in\mathbb Z_{\geq 0}^n$ with $|\tau|<T'$
and Leibniz' rule gives
$$(\diff^{\tau}\Phi_s)(0)=\big(\diff^{\tau}(\Phi^*_sE_s^D)\big)(0)=0.$$
Using Leibniz' rule again, one gets
$$(\diff^t\Phi_P)(su)=\big(\diff^t(\Phi_sE_s^{-D})\big)(0)=(\diff^{t}\Phi_{s})(0)E_{s}^{-D}(0).$$
The same arguments as in the proof of Proposition \ref{sg1} (just replace $S_0$ by $S$) show that
$$h((\diff^t\Phi_{s})(0,0))\ll T'(\cc+\log\cd+\log(D+T'\log\ccc)+b)+DS^2h.$$
Furthermore
$$h(E_s^{-D}(0))=h(E_0(su,0)^{-D})=Dh(E_0(su,0))\ll DS^2h.$$
Since $(\diff^{t}\Phi_P)(su)\neq 0$, Liouville's inequality gives
\begin{equation*}\begin{split}
\log|(\diff^t\Phi_P)(su)|_p&\gg -h((\diff^t\Phi_P)(su))=-h\big((\diff^t\Phi_{s})(0)E_s(0)^{-D}\big)\\
&\gg -\big(T'(\cc+\log\cd+\log(D+T'\ccc)+b)+DS^2h\big),
\end{split}\end{equation*}
and the proposition follows.
\end{proof}

\subsection{Proof of Proposition \ref{ss}}\label{pross}

Assume on the contrary that $\log_{G(F)}(\gamma)\in V_F$.  Since $K_v$ is a subfield of $F$ it follows that \[\log_{G(F)}(\gamma)=\log_{G(K_v)}(\gamma)\in \Lie(G(K_v)).\]
This means that  $\log_{G(K_v)}(\gamma)$ is an element in
\begin{equation*}
V_F\cap \Lie(G(K_v))=\big(V\otimes_KF\big)\cap(\frak g\otimes_KK_v)=V\otimes_KK_v.
\end{equation*}
In other words the element $u$ belongs to the $K_v$-vector space generated by the basis $e_1,\ldots,e_k$. We express $u$ in terms of this basis and get \[u=(u_1,\ldots,u_n)=w_{1}e_1+\cdots+w_{k}e_k\] for $w_{1},\ldots, w_{k}\in K_v$. By Section \ref{basis}, we write $e_j=(x_{j1},\ldots,x_{jn})$ for $j=1,\ldots,n$ and calculate that \[u=\sum_{j=1}^kw_{j}(x_{j1},\ldots,x_{jn})=\Big(\sum_{j=1}^kw_{j}x_{j1},\ldots,\sum_{j=1}^kw_{j}x_{jn}\Big)\] which means that \[u_i=\sum_{j=1}^kw_{j}x_{ji},\quad \forall i=1,\ldots,n.\]
Let $c$ be a sufficiently large positive constant and let $[x]$ denote the largest integer less than or equal to $x$. Let $\omega_L$ be the quantity defined in Section \ref{op}. We choose
\begin{equation*}\begin{split}
S_0&=[c\omega_Lb\log h],\\
S&=[c^2S_0],\\
D&=\Big[c^{\frac{5d+1}{n-d}}S_0^{\frac{n+1}{n-d}}h^{\frac{d}{n-d}}\Big],\\
T&=\Big[c^{\frac{4d+n+1}{n-d}}S_0^{\frac{n+1}{n-d}}h^{\frac{n}{n-d}}\Big].
\end{split}\end{equation*}
Our parameters satisfies $D^n\geq c_1S_0T^k$. Hence, there exists a polynomial $P$ with properties as in Proposition \ref {sg1} which we express as $P=\sum_{i} p_iM_i$ with $M_i$ monomials in $N+1$ variables. We write $\Phi$ instead of $\Phi_P$. For $t=(t_1,\ldots,t_k)\in\mathbb Z^k_{\geq 0}$ such that $|t|<T$ we define the analytic function \[f(z):=(\Delta^{t}\Phi)(zu)\] in the variable $z\in B(r)$. For $0\leq \tau<T$ we apply the composition rule for derivatives and get
\begin{equation*}\begin{split}
f^{(\tau)}(z)&=\big((u_1\partial_1+\cdots+u_n\partial_n)^\tau\Delta^{t}\Phi\big)(zu).
\end{split}\end{equation*}
We use the representations \[u_i=\sum_{j=1}^kw_{j}x_{ji},\quad \forall i=1,\ldots,n\] to write
\begin{equation*} \sum_{i=1}^nu_i\partial_i=\sum_{i=1}^n\Big(\sum_{j=1}^kw_{j}x_{ji}\Big)\partial_i=\sum_{j=1}^kw_{j}\Big(\sum_{i=1}^nx_{ji}\partial_i\Big)=\sum_{j=1}^kw_{j}\Delta_j.
\end{equation*}
This leads to \[f^{(\tau)}(z)=\big((w_{1}\Delta_1+\cdots+w_{k}\Delta_k)^\tau\Delta^{t}\Phi\big)(zu).\] Applying multinomial expansion and Proposition \ref{sg1} one gets \[f^{(\tau)}(s)=0,\quad \forall 0\leq s<S_0.\]
Define $\overline B(R):=\{x\in\mathbb C_p; |x|_p\leq R\}$, we shall show that $f$ is analytic in $\overline B(R)$ for $R:=p^{\frac{1}{2d}}$. To prove this, we observe that $v(K_v)=(1/d_v)\mathbb Z$ with $d_v:=[K_v:\mathbb Q_p]$. This means that \[v(u_i)=\dfrac{a_i}{d_v},\] for some $a_i\in\mathbb Z_{\geq 0}$. Since $u\in B^n(r)$ it follows that $v(u_i)-\alpha>0$, i.e. $a_i-d_v\alpha>0$. This together with the inequality $d=[K:\mathbb Q]\geq [K_v:\mathbb Q_p]=d_v$ implies that \[v(u_i)-\alpha=\dfrac{a_i-d_v\alpha}{d_v}\geq \dfrac{1}{d_v}\geq \dfrac{1}{d}.\]
Hence \[R|u_i|_p=p^{\frac{1}{2d}}p^{-v(u_i)}=p^{-\frac{1}{2d}}p^{-(v(u_i)-\alpha-\frac{1}{d})}p^{-\alpha}<p^{-\alpha}=r.\]
This shows that $zu\in B^n(r)$ for any $z\in \overline{B}(R)$. In particular, the function $f$ is analytic in $\overline B(R)$. This means that there is a power series $\sum_{n\geq 0}a_nx^n$ which is convergent in $\overline B(R)$ such that
$f(x)=\sum_{n\geq 0}{a_n}x^n$ for any $x\in \overline B(R)$. As usual for each $\kappa\in [0,R]$ we put
$$|f|_\kappa=\sup_{n\geq 0}|a_n|_p^n\kappa^n.$$
Define $R_i(x):=M_i(1,f_1(x),\ldots,f_N(x))$. We have
\begin{equation*}\begin{split}
f(z)&=(\Delta^{t}\Phi)(zu)\\
&=\sum_ip_i\big(\diff_{1}^{t_1}\cdots \diff_{k}^{t_k}R_i\big)(zu)\\
&=\sum_ip_i\big((x_{11}\partial_1+\cdots+x_{1n}\partial_{n})^{t_1}\cdots (x_{k1}\partial_1+\cdots+x_{kn}\partial_n)^{t_k}R_i\big)(zu)\\
\end{split}\end{equation*}
for $z\in\overline B(R)$.
By multinomial expansion again and by the ultrametric inequality one gets
$$|f(z)|_p\leq \max_{i}\max_{|t|<T}|(\Delta^{t}R_i)(zu)|_p.$$
On the other hand, Lemma \ref{ww} tells us that
$$(\Delta^{t}R_i)(zu)=Q_{i}(f_1(zu),\ldots,f_N(zu))$$
for polynomials $Q_i$ in $N$ variables such that $\cd^TQ_i$ have coefficients in $\mathcal O_K$. Since $zu\in B(r)$ from Section \ref{basis} we have
$|f_i(zu)|_p<1$ for $i=1,\ldots, N$. Hence
$$|(\Delta^{t}R_i)(zu)|_p\leq |\cd^{-1}|_p^{T}$$
which means that
$$|f(z)|_p\leq |\cd^{-1}|_p^{T},\quad\forall z\in \overline B(R).$$
In other words $|f|_R\leq |\cd^{-1}|_p^T.$
We now apply Lemma \ref{4} to the function $f$ to get
$$|f|_1\leq p^{-\frac{1}{2d}S_0T}|f|_R\leq |\cd^{-1}|_p^Tp^{-\frac{1}{2d}S_0T}.$$
Hence
$$|(\Delta^t \Phi)(su)|_p=|f(s)|_p\leq |f|_1\leq |\cd^{-1}|_p^Tp^{-\frac{1}{2d}S_0T}.$$
This leads to
\begin{equation}
\log|(\Delta^t \Phi)(su)|_p\ll -T\log|\cd|_p-S_0T\log p=-(S_0-e_L)T\log p
\end{equation}
for any $t\in\mathbb Z_{\geq 0}^d$ such that $|t|<T$.
We shall show that the order of $P$ along $V$ at any point $\gamma^s$ with $0\leq s<S$ is at least $T$. In fact, assume that it is not true at some point $\gamma^{s_0}$ with $0\leq s_0<S$, then by definition the exact order of $\Phi$ at $s_0u$ along $V_v$ is $T_0<T$, i.e. there is a $k$-tuple $t_0\in\mathbb Z_{\geq 0}^k$ such that $|t_0|=T_0$ and $(\Delta^{t_0}\Phi)(s_0u)\neq 0$. By Proposition \ref{lowerbound1} one has
$$\log|(\diff^{t_0}\Phi)(su)|_p\gg -(T_0(\cc+\log\cd+\log (D+T_0\ccc)+b)+DS^2h).$$
Combining this with $(3)$ we get
$$T(S_0-e_L)\log p\ll (T_0(\cc+\log\cd+\log (D+T_0\ccc)+b)+DS^2h).$$
This cannot hold if $c$ is large enough. Therefore $\Phi$ vanishes at any point of the set $\{su; 0\leq s<S_0\}$ of order at least $T$ along $V$. By the assumption $(G,V)$ is semistable and therefore \cite[Theorem 6.14]{aw} implies that there exists an integer $s'$ such that $0<s'<S$ and $\gamma^{s'}=e$, i.e. $\gamma$ is a torsion point, a contradiction. This ends the proof.

\subsection{Proof of Theorem \ref{past}}\label{propast}

The proof of Theorem \ref{past} is done by induction on $\dim G$. If $\dim G=1$ then $V$ must be equal to $\Lie(G)$, and we choose $H=G$. Now we assume that $\dim G>1$. By Proposition \ref{ss} we see that $(G,V)$ is not semistable. Lemma \ref{lss} shows that there is a proper quotient $\pi: G\rightarrow G'$ such that $(G',V')$ is semistable, where $V'=\pi_*(V)$. Let $\pi_F: G(F)\rightarrow G'(F)$ be the $F$-analytic homomorphism induced by $\pi$. It is known that the differential map $(\pi_F)_*=\pi\otimes_{\overline{\mathbb Q}}{\id}_F$.
By Section \ref{Log} we have that the following diagram
\begin{displaymath}
\xymatrix{
G(F)_f\ar[r]^{\pi_F}\ar[d]_{\log_{G(F)}}&
G'(F)_f\ar[d]^{\log_{G'(F)}}\\
\Lie(G(F))\ar[r]_{(\pi_F)_*}&\Lie(G'(F))}
\end{displaymath}
commutes, it follows that
$$\log_{G'(F)}(\pi_F(\gamma))=(\pi_F)*\big(\log_{G(F)}(\gamma)\big)\in (\pi_F)*(V\otimes_{\overline{\mathbb Q}}F)= \pi_*(V)\otimes_{\overline{\mathbb Q}}F=V'\otimes_{\overline{\mathbb Q}}F.$$
By Proposition \ref{ss} again we conclude that $\log_{G'(F)}(\pi_F(\gamma))$ must be $0$, this is equivalent to $\pi_F(\gamma)$ being a torsion point. Let $m$ be the order of $\pi_F(\gamma)$ and define $\widetilde G:=\ker\pi$. Then $\widetilde G$ is a commutative algebraic group defined over $\overline{\mathbb Q}$ and $\gamma^m\in \widetilde G(\overline{\mathbb Q})$ (since $\gamma\in G_f(\overline {\mathbb Q})$).
Therefore
$$\log_{\widetilde G(F)}(\gamma^m)
\in \Lie (\widetilde G(F))=\Lie(\widetilde G)\otimes_{\overline{\mathbb Q}}F$$
and
$$\log_{\widetilde G(F)}(\gamma^m)=\log_{G({F})}(\gamma^m)=m\log_{G(F)}(\gamma)\in V\otimes_{\overline{\mathbb Q}}F.$$
This gives
$$\log_{\widetilde G(F)}(\gamma^m)\subseteq (V\cap \Lie(\widetilde G))\otimes_{\overline{\mathbb Q}}F.$$
Furthermore we see that $\gamma^m\in \widetilde G(F)_f$. Since $\pi$ is proper it follows that the dimension of $\widetilde G$ is positive.
We apply the inductive hypothesis to the commutative algebraic group $\widetilde G$, the algebraic point $\gamma^m$ and the $\overline{\mathbb Q}$-linear subspace $V\cap \Lie(\widetilde G)$ of $\Lie(\widetilde G)$ to obtain an algebraic subgroup $\widetilde H$ of $\widetilde G$ of positive dimension
defined over $\overline{\mathbb Q}$ such that $\gamma^m\in \widetilde H(\overline{\mathbb Q})$ and such that $\Lie(\widetilde H)\subseteq \Lie(\widetilde G)\cap V\subseteq V$. Finally we take $H$ such that $H(\overline{\mathbb Q}):=\{\alpha\in G(\overline{\mathbb Q}); \alpha^m\in \widetilde{H}(\overline{\mathbb Q})\}$.
Then $H$ is an algebraic subgroup of $G$ of positive dimension defined over $\overline{\mathbb Q}$ and $\gamma\in H(\overline{\mathbb Q})$ and, in addition, $\Lie(H)\subseteq V$. This concludes the proof.

\subsection{Proof of Corollary \ref{cor1}}
The $F$-vector space $W\otimes_{\overline{\mathbb Q}}F$ is a subspace of $\Lie(G)\otimes_{\overline{\mathbb Q}}F$. In particular,
$\dim_{\overline{\mathbb Q}}W=\dim_F(W\otimes_{\overline{\mathbb Q}}F)$ is finite.
If $T$ contains only torsion points then the map $\log_{G(F)}$ vanishes on $T$. This means that $W$ is trival, and we choose $H=\{e\}$. Otherwise there exist a positive integer $m$ and non-torsion elements $\gamma_1,\ldots,\gamma_m$ in $T$ such that $\log_{G(F)}(\gamma_1),\ldots,\log_{G(F)}(\gamma_m)$ is a $\overline{\mathbb Q}$-basis for $W$. We apply successively Theorem \ref{past} with $\gamma=\gamma_i$, and obtain algebraic subgroups $H_i$ of $G$ defined over $\overline{\mathbb Q}$ such that $\gamma_i\in H_i$ and $\Lie(H_i)\subseteq W$ for $i=1,\ldots, m.$
Let $H=H_1\cdots H_m$ be the product of $H_1,\ldots,H_m$. This is an algebraic subgroup of $G$ defined over $\overline{\mathbb Q}$ since $G$ is commutative with
$$\Lie(H)=\Lie(H_1)+\cdots+\Lie(H_m).$$
Hence $\Lie(H)$ is a $\overline{\mathbb Q}$-vector subspace of $W$.
On the other hand
$$\log_{G(F)}(\gamma_i)=\log_{H_i(F)}(\gamma_i)\in \Lie(H_i(F))=\Lie(H_i)\otimes_{\overline{\mathbb Q}}F\subseteq \Lie(H)\otimes_{\overline{\mathbb Q}}F$$
for $i=1,\ldots,m$
and this implies that $W\otimes_{\overline{\mathbb Q}}F\subseteq \Lie(H)\otimes_{\overline{\mathbb Q}}F$. We conclude that
$$\dim_{\overline{\mathbb Q}}\Lie(H)=\dim_{\overline{\mathbb Q}}W,$$
i.e. $\Lie(H)=W$. This proves the corollary.

\subsection{Proof of Corollary \ref{cor2}}
By Section \ref{Log} we have that the following diagram
\begin{displaymath}
\xymatrix{
X(F)_f\ar[r]^{\varphi_F}\ar[d]_{\log_{X(F)}}&
G(F)_f\ar[d]^{\log_{G(F)}}\\
\Lie(X(F))\ar[r]_{(\varphi_F)_*}&\Lie(G(F))}
\end{displaymath}
is commutative. This gives
\begin{equation*}
\begin{split}
\log_{G(F)}(\varphi_F(x))&=(\varphi_F)_*(\log_{X(F)}(x))\subseteq (\varphi_F)_*(\Lie X(F))
\\
&=(\varphi_F)_*(\Lie(X)\otimes_{\overline{\mathbb Q}}F)=\varphi_{*}(\Lie(X))\otimes_{\overline{\mathbb Q}}F.
\end{split}
\end{equation*}
The corollary therefore follows from Theorem \ref{past}.

\section{Acknowledgments}

The authors are most grateful to Professor G. W\"ustholz for very insightful discussions and for constant encouragement to work on the topic. Moreover, they are grateful to an anonymous referee for many helpful remarks. The first author was supported by Austrian Science Fund (FWF): P24574. The second author was supported by grant PDFMP2\_122850 funded by the Swiss National Science Foundation (SNSF).

\noindent{\sc Clemens Fuchs}\\
Department of Mathematics\\ University of Salzburg\\ Hellbrunnerstr. 34\\ 5020 Salzburg\\ Austria\\
Email: {\sf clemens.fuchs@sbg.ac.at}\\

\noindent{\sc Duc Hiep Pham}\\
Department of Mathematics\\
Hanoi National University of Education\\ 136 Xuan Thuy, Cau Giay, Hanoi\\ Vietnam\\
Email: {\sf phamduchiepk6@gmail.com}


\begin{thebibliography}{9}
\bibitem{aw} A. Baker and G. W\"ustholz, \emph{Logarithmic forms and Diophantine geometry}, New Mathematics Monographs, vol. {\bf 9}, Cambridge University Press, 2007.
\bibitem{bv} E. Bombieri and J. Vaaler, \emph{On Siegel's lemma}, Invent. Math. {\bf 73} (1983), 11-32.
\bibitem{Bourbaki} N. Bourbaki, \emph{Elements of Mathematics. Lie groups and Lie algebras. Part I:  Chapters {\bf 1-3}. English translantion.,} Actualities scientifiques et industrielles, Herman. Adiwes International Series in Mathematics. Paris: Hermann, Publishers in Arts and Science; Reading, Mass.: Addison-Wesley Publishing Company. XVII, 1975.
\bibitem{bertrand} D. Bertrand, \emph{Lemmes de z\`eros et nombres transcendants}, S\'eminaire Bourbaki Vol. 1985/86, Ast\'erisque {\bf 145-146} (1987), 21-44.
\bibitem{fw} G. Faltings and G. W\"ustholz, \emph{Einbettungen kommutativer algebraischer Gruppen und einige Eigenschaften}, J. Reine Angew. Math. {\bf 354} (1984), 175-205.
\bibitem{fuchs-pham} C. Fuchs and D. H. Pham, \emph{Commutative algebraic groups and $p$-adic linear forms}, http://arxiv.org/pdf/1404.4209v1.pdf.
\bibitem{matev} T. Matev, \emph{The p-adic analytic subgroup theorem and applications}, http://arxiv.org/pdf/1010.3156v1.pdf.
\bibitem{se} J. P. Serre, \emph {Quelques propri\'et\'es des groupes alg\'ebriques commutatifs}, Ast\'erisque {\bf 69-70}, 1979.
\bibitem{w1} G. W\"ustholz, \emph{ Algebraische Punkte auf Analytischen Untergruppen algebraischer Gruppen}, Ann. of Math. {\bf 129} (1989), 501-517.
\bibitem{z} Y. G. Zarhin, \emph{$p$-adic abelian integrals and commutative Lie groups}, J. Math. Sci. {\bf 81} (1996), no. 3, 2744-2750.
\vspace*{0.2cm}
\end{thebibliography}
\end{document}